\tikzset{bluenode/.style={circle,fill=gray!50,minimum size=0.4cm,inner sep=0pt},}
\tikzset{rednode/.style={circle,fill=black!100,minimum size=0.4cm,inner sep=0pt},}
\DeclareMathOperator{\id}{Id}
\newcommand{\R}{\ensuremath{\mathbb{R}}}
\newcommand{\N}{\ensuremath{\mathbb{N}}}
\theoremstyle{plain}
\newtheorem{theorem}{Theorem}[section]
\newtheorem{proposition}[theorem]{Proposition}
\newtheorem{corollary}[theorem]{Corollary}
\theoremstyle{definition}
\newtheorem{example}[theorem]{Example}
\theoremstyle{remark}
\newtheorem{remark}{Remark}[section]
\newcommand{\bel}[1]{\begin{equation}\label{#1}}
\newcommand{\be}{\begin{equation}}
\newcommand{\ba}{\begin{eqnarray}}
\newcommand{\ea}{\end{eqnarray}}
\newcommand{\rf}[1]{(\ref{#1})}
\newcommand{\qe}{\end{equation}}
\begin{document}
	\bibliographystyle{plain} 
	\title{Random walks and Laplacians on hypergraphs:\\ When do they match?}
	
	\author[1,2,3]{Raffaella Mulas} 
	\author[4,5]{Christian Kuehn}
	\author[4]{Tobias Böhle}
	\author[3,6]{Jürgen Jost}
	\affil[1]{The Alan Turing Institute, The British Library, London NW1 2DB, UK}
	\affil[2]{University of Southampton, University Rd, Southampton SO17 1BJ, UK}
	\affil[3]{Max Planck Institute for Mathematics in the Sciences, Inselstr.~22, 04103 Leipzig, Germany}
	\affil[4]{Department of Mathematics, Technical University of Munich, Boltzmannstr.~3, 85748 Garching b.~M\"unchen, Germany}
	\affil[5]{Complexity Science Hub Vienna, Josefst\"adter Str.~39, 1080 Vienna, Austria}
	\affil[6]{Santa Fe Institute for the Sciences of Complexity, 1399 Hyde Park Road Santa Fe, New Mexico 87501, USA}
	\date{}
	\maketitle
	\allowdisplaybreaks[4]

	\begin{abstract}
		We develop a general theory of random walks on hypergraphs which includes, as special cases, the different models that are found in literature. In particular, we introduce and analyze general random walk Laplacians for hypergraphs, and we compare them to hypergraph normalized Laplacians that are not necessarily related to random walks, but which are motivated by biological and chemical networks. We show that, although these two classes of Laplacians coincide in the case of graphs, they appear to have important conceptual differences in the general case. We study the spectral properties of both classes, as well as their applications to Coupled Hypergraph Maps: discrete-time dynamical systems that generalize the well-known Coupled Map Lattices on graphs. Our results also show why for some hypergraph Laplacian variants one expects more classical results from (weighted) graphs to generalize directly, while these results must fail for other hypergraph Laplacians.
	\end{abstract}
	
	\section{Introduction}
	Random walks on graphs were  introduced in the seminal paper
	\cite{Courant28}. Since then, the theory of random walks on graphs has been
	developed by many people, and profound connections to other fields have been
	established, see for instance \cite{Grimmett10}.
	
	Recently, hypergraphs have received much attention, constituting a more general framework  than graphs that is more adequate for the modelling of many empirical interaction data \cite{bitcoin,Theis,epidemic,social,battiston2020networks,bradde09,Sun}. Therefore, it seems natural to also investigate random processes on hypergraphs. In \cite{rwhyp,Banerjee,carlettidynamical,carletti2021random}, different models for random walks on hypergraphs have been introduced and analyzed. Here, we present a systematic account of random walks on hypergraphs, by developing it from general principles. We study, in particular, general random walk Laplacians for hypergraphs, and we compare them to other known types of hypergraph Laplacians, which are motivated by biological and chemical networks. We find that none of these classes of Laplacians is to be preferred over the other, as they encode complementary information. We analyze and compare the spectral properties of the different Laplacians, and we investigate their applications to discrete-time dynamical systems. We remark that in this paper, for simplicity and in line with most of the literature, we do not consider lazy random walks, although it is not overly difficult to extend our considerations to those. That is, we assume that a random walker is not allowed to remain at a vertex, but is required to move to one the neighbors during the next time step.
	
	The paper is structured as follows. In Section \ref{rgr}, we recall the main terminology concerning random walks on both simple graphs and weighted, directed graphs. We discuss, in particular, the theoretical details of the corresponding \emph{random walk Laplacian}. For a graph $\Gamma$ on $N$ nodes, this is an operator that can be written, in matrix form, as $L=\id-D^{-1}A$, where $\id$ is the $N\times N$ identity matrix, $A$ is the adjacency matrix and $D$ is the diagonal degree matrix of the graph. Hence, if $v_1,\ldots,v_N$ denote the vertices of $\Gamma$, then for $i\neq j$,
	\begin{equation*}
	-L_{ij}=\frac{A_{ij}}{d(v_i)}
	\end{equation*}
	is the probability that a random walker goes from $v_i$ to $v_j$. It is therefore not surprising that the \emph{spectrum} of $L$, i.e., the multiset of its eigenvalues, is strictly related to the properties of this random walk. But additionally, it is also well-known that this spectrum encodes qualitative and structural properties of the graph that do not necessarily have to do with its corresponding random walk. This is why the study of the spectral properties of $L$ is widely present in graph theory, and it also finds many applications, as for instance in dynamical systems and data analysis~\cite{Chung}.
	
	In Section \ref{rhyp}, we discuss the conceptual differences between the various random walks on hypergraphs that generalize the classical ones on graphs. We suggest, in particular, that a hypergraph random walk is preferable if it takes the hyperedge sizes (therefore, not only the pairwise connections between vertices) into account. We then develop, in Section \ref{section:Lrw}, a general theory of hypergraph random walks, which includes the examples from Section \ref{rhyp} as special cases. In particular, for any given random walk on a hypergraph, we consider the random walk Laplacian $\mathcal{L}$ such that
	\begin{equation*}
	\mathcal{L}_{ij}=\begin{cases}1 & \text{ if }i=j\\
	-\mathbb{P}(v_i\rightarrow v_j) & \text{ if }i\neq j,
	\end{cases}\end{equation*}
	so that its entries encode, as in the case of graphs, the probabilities of the random process. We show that, since the probabilities are \emph{pairwise} relations between vertices, every hypergraph random walk Laplacian $\mathcal{L}$ coincides with the random walk Laplacian of a (possibly weighted and directed) graph. As a consequence, we show how the spectral properties of such Laplacians can be inferred from the results that are known for graphs. Moreover, Section \ref{section:orientedL} is devoted to the comparison between the hypergraph random walk Laplacians and the normalized Laplacians for oriented hypergraphs that were motivated by chemical interactions and introduced in \cite{JM}; for investigations into properties of these hypergraph Laplacians see e.g.~\cite{MulasZhang,Mulas-Cheeger,JM21b,Master-Stability,BKJM}. We observe that, while these latter operators do not encode properties of random walks in general, their spectrum is nevertheless capable of encoding qualitative properties of the hypergraph that are not encoded by the spectra of the random walk Laplacians. Thus, these two classes of hypergraph Laplacians are complementary to each other. 
	
	Finally, in Section \ref{section:CHM} we study mathematical foundations of Coupled Hypergraph Maps (CHMs): dynamical systems on hypergraphs that were introduced in \cite{BKJM} as a generalization of Coupled Map Lattices (CMLs)~\cite{Kaneko}. CHMs are based on Laplacian-type coupling, therefore the generated dynamics changes when a different type of Laplace operator is considered. There are actually two main motivations, why it is crucial to study the impact of different hypergraph operators. From a theoretical perspective, there are many mathematical methods one wants to generalize to CHMs such as the renormalization group~\cite{LemaitreChate}, which require precise algebraic statements about invariance and commutation properties of the hypergraph coupling operator. From a more application-oriented perspective, there is an ongoing debate, under which conditions one can reduce, or cannot reduce, dynamical systems on hypergraphs to the case of graphs~\cite{battiston2020networks,KuehnBick,NeuhaeuserMellorLambiotte,SkardalArenas}. Our results for CHMs clearly show that already on an algebraic level of invariance and commutation properties, the precise choice of hypergraph Laplacian is crucial, whether or not one can expect to generalize results about dynamics on graphs to hypergraphs. 
	
	\section{Random walks on graphs}
	\label{rgr}
	Since the ultimate interest of \cite{Courant28} was in the discretization of
	partial differential equations, they considered only regular graphs. On regular
	graphs, in
	a terminology to be explained in a moment, the forward and the backward
	operator coincide. Of course, since then, the general case has been treated in
	the literature. We follow here the simple presentation in \cite{biology}.\newline
	All graphs will be
	connected. In particular, we exclude isolated vertices, as they cannot
	participate in random processes in a nontrivial way. We start with an
	undirected and unweighted graph $\Gamma$ with vertex set $V$ and write $w\sim v$ and call  the vertices
	$v,w$ neighbors, if they are connected by an edge. The degree $d(v)$ of $v$ is
	the number of its neighbors.\newline
	The rule for a random walker is simple. When at time $n\in N$ at vertex $v$,
	she chooses one of the neighbors of $v$ with equal probability as her position
	at time $n+1$. Thus, let $p(n,w,v)$ be the probability that when starting at
	$v$ at time $0$, she reaches $w$ at time $n$. Then we have the forward equation 
	\bel{g1}
	p(n+1,w,v)=\sum_{z\sim w} \frac{1}{d(z)}p(n,z,v)
	\qe
	and the backward equation 
	\bel{g2}
	p(n+1,w,v)=\frac{1}{d(v)}\sum_{u\sim v} p(n,w,u).
	\qe
	Eq.\ \rf{g1} says that whatever reaches $w$ after $n+1$ steps from $v$ must have reached one
	of its neighbors after $n$ steps. Eq.\ \rf{g2} says that whatever reaches $w$
	after $n+1$ steps from $v$ must reach $w$ after $n$ steps from one of the
	neighbors of $v$. We therefore have a forward process
	\bel{g3}
	p(n+1,w,v)-p(n,w,v)=\sum_{z\sim w} \frac{1}{d(z)}p(n,z,v)-p(n,w,v) =:-L^\ast p(n,w,v)
	\qe
	and a backward process
	\bel{g4}
	p(n+1,w,v)-p(n,w,v)=\frac{1}{d(v)}\sum_{u\sim v}p(n,w,u)-p(n,w,v) =:-L p(n,w,v),
	\qe
	where the operator
	\bel{g5}
	Lf(v)=f(v)-\frac{1}{d(v)}\sum_{u\sim v} f(u)
	\qe
	operates on the variable $v$, and
	\bel{g6}
	L^\ast g(w)=g(w)-\sum_{z\sim w}\frac{1}{d(z)}g(z)
	\qe
	operates on $w$.
	We write $L^\ast$ here because the two operators are adjoints with respect to the
	scalar product
	\begin{equation*}
	\langle f,g\rangle_V :=\sum_v f(v)g(v)
	\end{equation*}
	for functions on $V$. We note, however, that $L$ is self-adjoint with respect to the
	product
	\begin{equation*}
	(f,g)_V:= \sum_v d(v) f(v)g(v). 
	\end{equation*}
	Thus, $L$ has a real spectrum, and the two operators $L^\ast$ and $L$ are isospectral.\newline
	This can be easily generalized to weighted graphs. If $\omega_{vw}=\omega_{wv}\ge 0$ is the
	weight of the edge between $v$ and $w$ (we may put $\omega_{vw}=0$ if $w
	\nsim v$), the probability of a random walker moving from $v$ to $w$ is
	proportional to the weight $\omega_{vw}$. Putting
	\begin{equation*}
	d(v):=\sum_w \omega_{vw},
	\end{equation*}
	the generators then remain as in \rf{g5}, \rf{g6}.
	We may also allow for self-loops, that is, for the possibility that
	$\omega_{vv}>0$. In that case, the walker stays at $v$ with probability
	$\frac{\omega_{vv}}{d(v)}$, and moves to some neighboring vertex with probability
	$\frac{d(v)-\omega_{vv}}{d(v)}$ only. One speaks of a lazy random walker in
	that situation. \newline
	So far, we have assumed that the graph is undirected, that is,
	$\omega_{vw}=\omega_{wv}$. Of course, a random walk is still possible on a
	directed graph, that is, when that symmetry need not hold. When $
	\omega_{vw}$ is the weight of the edge from $v$ to $w$, we put
	\begin{equation*}
	d_{out}(v):=\sum_w \omega_{vw},\quad  d_{in}(v):=\sum_w \omega_{wv}.
	\end{equation*}
	The generators then are
	\bel{g11}
	Lf(v)=f(v)-\frac{1}{d_{out}(v)}\sum_{u\sim v} f(u)
	\qe
	and
	\bel{g12}
	L^\ast g(w)=g(w)-\sum_{z\sim w}\frac{1}{d_{in}(z)}g(z).
	\qe
	In this case, however, $L$ needs no longer be self-adjoint, and it may have
	imaginary eigenvalues.\newline
	When we assume
	\begin{equation*}
	\sum_v p(0,v,v)=1\quad \text{and}\quad p(0,w,v)=0 \quad \text{for } w\neq v,
	\end{equation*}   then
	\bel{g15}
	\sum_w p(n,w,v)=1=\sum_v p(n,w,v)\quad \text{for all }v,w\in V, n\in \N,
	\qe
	that is, the total probability for the location and for the origin of the random walker remains
	$1$ for the processes \rf{g3}, \rf{g4}  governed by $L,L^\ast$.\newline
	The processes need not converge to constant limits. This is already seen
	for the simplest graph, consisting of two vertices $v_1,v_2$ connected by an
	edge. If $p(0,v_1,v_1)=1$, that is, the random walker starts at $v_1$, then $p(n,v_1,v_1)=1$, $p(n,v_2,v_1)=0$ for even
	$n$ and $p(n,v_1,v_1)=0$, $p(n,v_2,v_1)=1$ for odd $n$, that is, the random
	walker is always at $v_1$ at even times and at $v_2$ at odd times. More generally, 
	an oscillatory pattern occurs if and only if $\Gamma$ is bipartite. Otherwise, the
	process does converge to a constant equilibrium.
	
	\begin{remark}We may also consider a random walk with an absorbing boundary. For that purpose, we select some non-empty boundary set $V_0 \subsetneq V$ for which $V\backslash V_0$ is connected. A random walker starting at some $v\in V\backslash V_0$ then moves according to the above rule until she reaches some $v_0\in V_0$ where she then remains. When $V\backslash V_0$ is finite, the random walker will eventually end up at some boundary vertex. 
	\end{remark}
	\begin{remark}
		In graph theory, often the unnormalized Laplacian
		\begin{equation*}
		\Lambda f(v)=d(v) f(v)-\sum_{u\sim v} f(u)
		\end{equation*}
		is considered. That operator, however, does not allow for a probabilistic
		interpretation because a process governed by it in general does not satisfy \rf{g15}.
	\end{remark}
	Besides the random walk, there is another natural process that can be defined
	on graphs, the \emph{diffusion}. Here, we have some substance
	with concentration $f(0,v)$ at time $0$. On a graph, the rule is simply that
	whatever is at $v$ at time $n$ is distributed among its neighbors $w$ at time
	$n+1$ (in proportion to the weights $\omega_{vw}$ when the graph is weighted
	or directed), and likewise what accumulates at $w$ at time $n+1$ is received
	from its neighbors. Therefore, the resulting  process $f(n,w,v)$ follows the
	evolution equations \rf{g1}, \rf{g2}. This is completely analogous to the
	relation between Brownian motion and heat conduction in the continuous case in,
	say, Euclidean spaces.  
	
	\section{Random walks on hypergraphs}\label{rhyp}
	Let $\Gamma=(V,E)$ be a hypergraph with vertex set $V$ and hyperedge set $E$, again
	unweighted and undirected to start with. Now,
	there are various possibilities for defining a random walk. In the simplest
	case, a  walker that is
	at $v$ has two options for choosing her next position:
	\begin{enumerate}
		\item[1)] Go to a vertex $w$ that is connected to $v$ with a probability that depends on the number of hyperedges that $v$ and $w$ have in common, but not on the sizes of such hyperedges.
		\item[2)] First choose among the hyperedges containing $v$ with equal
		probability, and when one such hyperedge is selected, go to any of its
		vertices other than $v$  with equal probability. 
	\end{enumerate}
	The second choice seems preferable as the first one does not take the hyperedge sizes, therefore the
	hypergraph structure, into account. The process would be the same if we
	replaced any hyperedge by a complete graph, that
	is, replace the hypergraph by a multi-graph with the same connectivity
	pattern.\newline
	This second choice also originates from a different model. Replace the hypergraph by a bipartite graph, with one class $A$ of vertices corresponding to those of the hypergraphs, and the other class $B$ corresponding to the hyperedges. And when the hypergraph is directed, that resulting graph would also be directed. In that graph, we then consider a two-step random process. Start at a node from $A$, go with equal probability to one of the vertices in $B$ to which it has a (directed) connection, and then from such a vertex in $B$, seek one of the vertices in $A$ to which it has a (directed) connection with equal probability. In the undirected case, there then is the possibility that the random walker returns to its starting vertex in $A$, which would correspond to a slight modification of the hypergraph random walk, making it a lazy random walk. In the directed case, however, the two processes are equivalent. The point is that in this process, two probabilistic choices are made, one for a hyperedge, and the other for a vertex in that hyperedge, and that is translated into the two-step random walk in the bipartite graph. \newline
	The directed case just discussed also covers the situation of oriented hypergraphs. 
	In fact, as argued in \cite{JM}, we should include more structure in order
	to have a good theory of Laplace type operators, generalizing \rf{g5},
	\rf{g6},  on hypergraphs. The vertex set of each hyperedge $e$ should be
	grouped into two disjoint (and not necessarily non-empty) classes $V_{in}(e)$ and
	$V_{out}(e)$, like the reactants and products of a chemical reaction that is
	represented by the hyperedge $e$. In that case,  rule 2) would become
	\begin{enumerate}
		\item[3)] If a hyperedge with $v\in V_{in}(e)$ is chosen, take one of the
		vertices in $V_{out}(e)$ with equal probability, and conversely. That is,
		a random walker is not allowed to move from an input (output) to another
		input (output) of a hyperedge. 
	\end{enumerate}
	\begin{remark}
		When the vertices of a hyperedge are classified into in- and outputs, we
		may call that hyperedge \emph{oriented}. 
		Changing the orientation then means exchanging in- and outputs. The concept of an \emph{oriented}  hypergraph is to be distinguished from that of a \emph{directed} one, because in  such an oriented hypergraph, for each oriented hyperedge, we find one \emph{orientation} but both \emph{directions}. 

	\end{remark}
	\begin{remark}
		On  a \emph{graph}, an orientation of an edge is fixed if we declare one of
		the vertices as in- and the other as output. Changing the orientation of an edge does not change a random walk or the Laplacian of the graph. 
	\end{remark}
	
	\begin{remark}
		Diffusion on hypergraphs can be defined as in the case of graphs. The analogy between random walks and diffusion processes pertains to hypergraphs. 
	\end{remark}
	We now make an important \emph{observation}. A random walk on a hypergraph as conceived here is defined in terms of transition probabilities between vertices, that is, by probabilities for going from one vertex to another one. But this is a \emph{pairwise} relation between vertices, and any pairwise relation between vertices can be encoded by an ordinary graph. Therefore, also the various random walks defined here all possess equivalent representations in terms of weighted graphs, as will be formally stated and proved in Theorem \ref{thm:effective}. Therefore, in particular, the generators of such random walks, the Laplace operators to be studied in the next section, have to satisfy all the properties that normalized graph Laplacians enjoy, like the maximum principle. The most important such properties will be listed in Corollary \ref{cor:connected} below.
	
	\section{Hypergraph random walk Laplacians}\label{section:Lrw}
	We now develop a \emph{general} theory of random walks and random walk Laplacians for hypergraphs. We say that a discrete-time random process is a \emph{random walk on a hypergraph $\Gamma=(V,E)$} if it is a path of vertices $(v_{t_0},v_{t_1},v_{t_2},\ldots)$ such that:
	\begin{itemize}
		\item For each $k\geq 1$, $v_{t_k}$ is chosen with a certain probability that only depends on $v_{t_{k-1}}$, i.e., the process is a Markov chain;
		\item Given any two vertices $v_i\neq v_j$, the probability $\mathbb{P}(v_i\rightarrow v_j)$ of going from $v_i$ to $v_j$ is such that
		\begin{equation*}
		\mathbb{P}(v_i\rightarrow v_j)\neq 0 \iff \exists e\in E: v_i, v_j\in e.
		\end{equation*}
	\end{itemize}
	We say that an $N\times N$ matrix $\mathcal{L}$ is a \emph{random walk Laplacian} if there exist a connected hypergraph $\Gamma=(V,E)$ on $N$ vertices $v_1,\ldots,v_N$ and a random walk on $\Gamma$ such that
	\begin{equation}\label{RWL}
	\mathcal{L}_{ij}=\begin{cases}1 & \text{ if }i=j\\
	-\mathbb{P}(v_i\rightarrow v_j) & \text{ if }i\neq j.
	\end{cases}
	\end{equation}
	Given a random walk Laplacian $\mathcal{L}$ and a corresponding random walk, let $\mathcal{D}$ be an $N\times N$ diagonal matrix with positive diagonal entries, and $\mathcal{A}$ be an $N\times N$ non-negative matrix with zeros on the diagonal, such that, for $i\neq j$,
	\begin{equation}\label{eq:generalRW}
	\mathbb{P}(v_i\rightarrow v_j)=\frac{\mathcal{A}_{ij}}{\mathcal{D}_{ii}}.
	\end{equation}Note that this is always possible if no further assumptions are made. In fact, one can always let $\mathcal{D}=\id$ be the $N\times N$ identity matrix, and let $\mathcal{A}_{ij}=\mathbb{P}(v_i\rightarrow v_j)$. However, if one requires $\mathcal{A}$ to be symmetric, it is not always possible to find $\mathcal{A}$ and $\mathcal{D}$ that satisfy \eqref{eq:generalRW}. Moreover, in general, $\mathcal{A}$ and $\mathcal{D}$ that satisfy \eqref{eq:generalRW} are not uniquely determined.\newline
	Now, let $\mathcal{A}$ and $\mathcal{D}$ that satisfy \eqref{eq:generalRW}, with no further assumptions. For each $i$,
	\begin{equation}\label{eq:AD}
	1=\sum_{j}\mathbb{P}(v_i\rightarrow v_j)=\sum_{j}\frac{\mathcal{A}_{ij}}{\mathcal{D}_{ii}},
	\end{equation}therefore $\mathcal{D}^{-1}\mathcal{A}$ is a $1$-row sum matrix. Moreover, we can write 
	\begin{equation*}
	\mathcal{L}=\id-\mathcal{D}^{-1}\mathcal{A}.
	\end{equation*}
	Its transpose is
	\begin{equation*}
	\mathcal{L}^\top=\id-\mathcal{A}\mathcal{D}^{-1},
	\end{equation*}hence
	\begin{equation*}
	\mathcal{L}^\top_{ij}=\begin{cases}1 & \text{ if }i=j\\
	-\mathbb{P}(v_j\rightarrow v_i) & \text{ if }i\neq j.
	\end{cases}
	\end{equation*}
	Therefore, while $\mathcal{L}$ describes where something
	goes to, as the operators \eqref{g5} and \eqref{g11} in the case of graphs, its transpose $\mathcal{L}^\top$ says where something comes from, as the operators \eqref{g6} and \eqref{g12} in the case of graphs. Also, $\mathcal{L}$ is a zero-row sum matrix,  while $\mathcal{L}^\top$ is a zero-column sum
	matrix. And since they are transpose of each other, $\mathcal{L}$ and $\mathcal{L}^\top$ are isospectral.
	
	\begin{example}If we consider the first random walk process described in Section \ref{rhyp}, then $\mathcal{D}_{ii}:=\sum_{e\in E: v_i\in e}|e|-1$ and
		\begin{equation*}
		\mathcal{A}_{ij}:=\begin{cases}
		0 & \text{ if }i=j\\
		|\{e\in E:v_i,v_j\in e\}| & \text{ otherwise}
		\end{cases}
		\end{equation*}satisfy \eqref{eq:generalRW}. Also, the corresponding random walk Laplacian is the one that has been studied in \cite{rwhyp}.
	\end{example}
	
	\begin{example}\label{exam:second}If we consider the second random walk process described in Section \ref{rhyp}, then $\mathcal{D}_{ii}:=|\{e\in E:v_i\in e\}|$ and 
		
		\begin{equation*}
		\mathcal{A}_{ij}:=\begin{cases}
		0 & \text{ if }i=j\\
		\sum_{e\in E:\,v_i,v_j\in e}\frac{1}{|e|-1} & \text{ otherwise}
		\end{cases}
		\end{equation*}
		satisfy \eqref{eq:generalRW}. In fact, in this case,
		\begin{align*}
		\mathbb{P}(v_i\rightarrow v_j)&=\sum_{e\in E\,:\, v_i,v_j\in e}\mathbb{P}(\text{going from $v_i$ to }e)\cdot \mathbb{P}(\text{going from $e$ to }v_j|v_i\rightarrow e)\\
		&=\sum_{e\in E\,:\, v_i,v_j\in e}\frac{1}{|\{e\in E:v_i\in e\}|}\cdot \frac{1}{|e|-1}\\
		&=\frac{\mathcal{A}_{ij}}{\mathcal{D}_{ii}}.
		\end{align*}
		The corresponding random walk Laplacian is the one that has been studied by Banerjee in \cite{Banerjee}.
	\end{example}
	\begin{example}If we consider the random walk process that has been introduced in \cite{carlettidynamical}, then
		$\mathcal{D}_{ii}:=\sum_{j\neq i}\sum_{e\in E\,:\,v_i,v_j\in e}|e|-1$ and
		\begin{equation*}
		\mathcal{A}_{ij}:=\begin{cases}
		0 & \text{ if }i=j\\
		\sum_{e\in E\,:\,v_i,v_j\in e}|e|-1 & \text{ otherwise}
		\end{cases}
		\end{equation*}satisfy \eqref{eq:generalRW}.
	\end{example}
	
	\begin{remark}
		All three examples above coincide with the classical random walk process on undirected graphs that we described in Section \ref{rgr}. In particular, in all these cases, the corresponding random walk Laplacian coincides with the operator in \eqref{g5}. In the more general case of hypergraphs, the three processes in the examples and the corresponding operators are not necessarily equivalent. 
	\end{remark}
	
	\begin{example}If we consider the third random walk process described in Section \ref{rhyp}, then $\mathcal{D}_{ii}:=|\{e\in E:v_i\in e\}|$ and 
		\begin{equation*}
		\mathcal{A}_{ij}:=\begin{cases}
		0 & \text{ if }i=j\\
		\sum_{\substack{e\in E\,:\,v_i,v_j\in e\\ \text{anti-oriented}}}\frac{1}{|w\in e\,:\,v_i,w \text{ anti-oriented in }e |}&\text{ otherwise}
		\end{cases}
		\end{equation*}
		satisfy \eqref{eq:generalRW}. In fact, in this case,
		
		\begin{align*}
		\mathbb{P}(v_i\rightarrow v_j)&=\sum_{e\in E\,:\, v_i,v_j\in e}\mathbb{P}(\text{going from $v_i$ to }e)\cdot \mathbb{P}_o(\text{going from $e$ to }v_j|v_i\rightarrow e)\\
		&=\sum_{\substack{e\in E\,:\, v_i,v_j\in e \\\text{anti-oriented}}}\frac{1}{|\{e\in E:v_i\in e\}|}\cdot \frac{1}{|w\in e\,:\,v_i,w \text{ anti-oriented in }e |}\\
		&=\frac{\mathcal{A}_{ij}}{\mathcal{D}_{ii}},
		\end{align*}
		where $\mathbb{P}_o$ indicates that the random walker can only go from
		$v_i$ through the hyperedge $e$ to an anti-oriented vertex. Note that, with this construction, $\mathcal{A}$ is not necessarily symmetric.
	\end{example}
	
	Now, given any random walk on a hypergraph, the corresponding Laplacian coincides with the random walk Laplacian of a weighted (and possibly directed) graph, as shown by the next theorem.
	
	\begin{theorem}\label{thm:effective}
		Let $\Gamma=(V,E)$ be a hypergraph. Given a random walk on $\Gamma$, let $\mathcal{L}$ be the corresponding Laplacian and let $\mathcal{A}$ and $\mathcal{D}$ that satisfy \eqref{eq:generalRW}. Let also $\mathcal{G}:=(V,\mathcal{E},\omega)$ be the weighted (and possibly directed) graph which is defined by letting
		\begin{equation*}
		(v_i,v_j)\in  \mathcal{E} \iff \mathcal{A}_{ij}>0
		\end{equation*}and $\omega_{v_iv_j}:=\mathcal{A}_{ij}$. Then, $\mathcal{L}$ coincides with the normalized Laplacian associated with $\mathcal{G}$.\newline
		We say that $\mathcal{G}$ is an effective underlying graph of the random process.
	\end{theorem}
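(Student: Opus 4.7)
The plan is to show directly that the matrix equality $\mathcal{L}=\id-\mathcal{D}^{-1}\mathcal{A}$ established above already is, entry-by-entry, the random walk Laplacian of the weighted (possibly directed) graph $\mathcal{G}$ that the theorem constructs. So the proof is essentially a definitional unpacking, whose only nontrivial ingredient is that the out-degrees of $\mathcal{G}$ are exactly the diagonal entries $\mathcal{D}_{ii}$.

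First, I would recall from Section \ref{rgr}, in particular equation \eqref{g11}, that for a weighted directed graph with edge weights $\omega_{v_iv_j}$ and out-degrees $d_{out}(v_i)=\sum_j\omega_{v_iv_j}$, the normalized random walk Laplacian acts in matrix form as $\id-D_{out}^{-1}W$, where $W$ is the weight matrix $(\omega_{v_iv_j})_{i,j}$ and $D_{out}=\diag(d_{out}(v_1),\ldots,d_{out}(v_N))$. Applied to $\mathcal{G}$ as defined in the statement, the weight matrix is literally $\mathcal{A}$ (with the convention $\omega_{v_iv_j}=0$ whenever $(v_i,v_j)\notin\mathcal{E}$, which is consistent because $\mathcal{A}_{ij}=0$ in that case).

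The key computation is then the identification of $D_{out}$ with $\mathcal{D}$. By the defining equation \eqref{eq:generalRW} of $\mathcal{A}$ and $\mathcal{D}$, together with the total-probability identity \eqref{eq:AD}, one has
\begin{equation*}
d_{out}(v_i)=\sum_j\omega_{v_iv_j}=\sum_j\mathcal{A}_{ij}=\mathcal{D}_{ii}
\end{equation*}
for every $i$, where we also used that $\mathcal{A}_{ii}=0$ and $\mathbb{P}(v_i\to v_i)=0$ (no lazy walks). Substituting into $\id-D_{out}^{-1}W$ yields exactly $\id-\mathcal{D}^{-1}\mathcal{A}$, which is $\mathcal{L}$. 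This completes the matching of the two Laplacians.

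The only step that requires any thought is checking that $\mathcal{G}$ is indeed a legitimate weighted graph in the sense of Section \ref{rgr}: its out-degrees are strictly positive (so $D_{out}^{-1}$ makes sense), which follows from the fact that $\mathcal{D}$ has positive diagonal by assumption, and every vertex genuinely has outgoing edges because $\sum_j\mathcal{A}_{ij}=\mathcal{D}_{ii}>0$. I do not foresee any real obstacle; the result is essentially the observation, already flagged in Section \ref{rhyp}, that a random walk is a pairwise relation and hence representable by an ordinary weighted graph, with the matrices $\mathcal{A}$ and $\mathcal{D}$ providing the explicit bookkeeping.
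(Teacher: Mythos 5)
Your proof is correct and follows essentially the same route as the paper: both identify the out-degree $d(v_i)=\sum_j\omega_{v_iv_j}=\sum_j\mathcal{A}_{ij}=\mathcal{D}_{ii}$ via \eqref{eq:AD}, recognize $\mathcal{A}$ as the weight matrix of $\mathcal{G}$, and conclude that $\mathcal{L}=\id-\mathcal{D}^{-1}\mathcal{A}$ is the normalized Laplacian of $\mathcal{G}$. Your additional check that the degrees are strictly positive is a harmless (and welcome) piece of extra care not spelled out in the paper.
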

	\begin{proof}
		The degree of a vertex in the weighted graph $\mathcal{G}$ is
		\begin{equation*}
		d(v_i)=\sum_{j} \omega_{v_iv_j}=\sum_{j}\mathcal{A}_{ij}=\mathcal{D}_{ii},
		\end{equation*}
		where the last equality follows by \eqref{eq:AD}. Hence, the degree matrix of $\mathcal{G}$ is $\mathcal{D}$, while the adjacency matrix of $\mathcal{G}$ is $\mathcal{A}$. Therefore, $\mathcal{L}=\id-\mathcal{D}^{-1}\mathcal{A}$ coincides with the normalized Laplacian associated with $\mathcal{G}$.
	\end{proof}
	
	\begin{figure}[h]
		\centering
		\begin{overpic}[width = .9\textwidth]{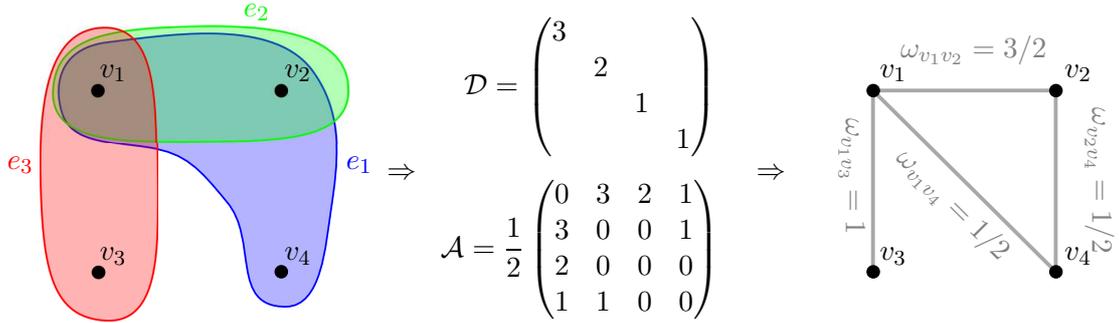}
			\put(30,15){\color{blue}$e_1$}
			\put(20,30){\color{green}$e_2$}
			\put(-3,15){\color{red}$e_3$}
			
			\put(6,24){$v_1$}
			\put(24,24){$v_2$}
			\put(6,6){$v_3$}
			\put(24,6){$v_4$}
			
			\put(82,24){$v_1$}
			\put(100,24){$v_2$}
			\put(82,6){$v_3$}
			\put(100,6){$v_4$}
			
			\put(84,26){\color{gray}$\omega_{v_1v_2} = 3/2$}
			\put(78,20){\color{gray}\begin{turn}{-90}$\omega_{v_1v_3} = 1$ \end{turn}}
			\put(83,16){\color{gray} \begin{turn}{-45}$\omega_{v_1v_4} = 1/2$ \end{turn}}
			\put(102,21){\color{gray} \begin{turn}{-90} $\omega_{v_2v_4} = 1/2$ \end{turn}}
			
			\put(34,14){$\Rightarrow$}
			\put(70,14){$\Rightarrow$}
			
			\put(26,53){\begin{minipage}[t][5cm][b]{0.5\textwidth}\begin{align*}\mathcal D = \begin{pmatrix}3&&&\\ &2&&\\ &&1&\\ &&&1\end{pmatrix}\end{align*}\end{minipage}}
			\put(25,37){\begin{minipage}[t][5cm][b]{0.5\textwidth}\begin{align*}\mathcal A = \frac 12\begin{pmatrix}0&3&2&1\\ 3&0&0&1\\ 2&0&0&0\\ 1&1&0&0\end{pmatrix}\end{align*}\end{minipage}}
			
		\end{overpic}
		
		\caption{Visualization of Theorem \ref{thm:effective}. We start with a hypergraph $\Gamma = (V,E)$ with $V=\{v_1,v_2,v_3,v_4\}$ and $E = \{e_1,e_2,e_3\}$. Here, $\mathcal D$ and $\mathcal A$ are chosen according to Example \ref{exam:second} and the resulting weighted graph $\mathcal G$ is depicted on the right.}
		\label{fig:hypergraph_to_graph}
	\end{figure}
	
	As a consequence of Theorem \ref{thm:effective}, we can recover the properties of the random walks on hypergraphs from the theory of random walks on weighted graphs. In the particular case when one can choose the matrix $\mathcal{A}$ to be symmetric, there exists a corresponding underlying graph that is undirected, and Corollary \ref{cor:connected} below holds. Before stating it, we recall the Courant--Fischer--Weyl min-max principle, which allows to characterize the eigenvalues of self-adjoint linear operators.
	
	\begin{theorem}[Courant--Fischer--Weyl min-max principle]\label{thm:minmax}
		Let $W$ be an $n$-dimensional vector space with a positive definite scalar product $(.,.)$. Let $\mathcal{W}_k$ be the family of all $k$-dimensional subspaces of $W$. Let $M : W \rightarrow W$ be a self-adjoint linear operator. Then the eigenvalues $\lambda_1\leq\ldots\leq \lambda_n$ of $M$ can be obtained by
		\begin{equation*}
		\lambda_k=\min_{W_{k}\in\mathcal{W}_{k}}\max_{g(\neq 0)\in W_{k}}\frac{(Mg,g)}{(g,g)}=\max_{W_{n-k+1}\in\mathcal{W}_{n-k+1}}\min_{g(\neq 0)\in W_{n-k+1}}\frac{(Mg,g)}{(g,g)}.
		\end{equation*}
		The vectors $g_k$ realizing such a min-max or max-min then are corresponding eigenvectors, and the min-max spaces $\mathcal{W}_k$ are spanned by the eigenvectors for the eigenvalues $\lambda_1,\ldots,\lambda_k$, and analogously, the max-min spaces $\mathcal{W}_{n-k+1}$ are spanned by the eigenvectors for the eigenvalues $\lambda_k,\ldots,\lambda_n$. Thus, we also have
		\begin{equation}\label{eqminmax}
		\lambda_k=\min_{g\in W,(g,g_j)=0\text{ for }j=1,\ldots,k-1}\frac{(Mg,g)}{(g,g)}=\max_{g\in V,(g,g_l)=0\text{ for }l=k+1,\ldots,n}\frac{(Mg,g)}{(g,g)}.
		\end{equation}
		In particular,
		\begin{equation*}
		\lambda_1=\min_{g\in W}\frac{(Mg,g)}{(g,g)},\qquad \lambda_n=\max_{g\in W}\frac{(Mg,g)}{(g,g)}.
		\end{equation*} 
	\end{theorem}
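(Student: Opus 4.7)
The plan is to reduce everything to the spectral theorem: since $M$ is self-adjoint on a finite-dimensional space with positive definite scalar product $(\cdot,\cdot)$, there exists an orthonormal basis $g_1,\ldots,g_n$ of eigenvectors with $Mg_i=\lambda_i g_i$ and $\lambda_1\leq\ldots\leq\lambda_n$. Expanding $g=\sum_{i=1}^n c_i g_i$ and using orthonormality gives the Rayleigh quotient identity
\begin{equation*}
\frac{(Mg,g)}{(g,g)}=\frac{\sum_i \lambda_i c_i^2}{\sum_i c_i^2},
\end{equation*}
which is a convex combination of eigenvalues weighted by $c_i^2$. This identity is the workhorse of the entire proof.

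For the min-max formula I would prove the two inequalities separately. For the upper bound, take $W_k^\star:=\mathrm{span}(g_1,\ldots,g_k)\in\mathcal{W}_k$; every $g\in W_k^\star$ has $c_i=0$ for $i>k$, so its Rayleigh quotient is at most $\lambda_k$, with equality at $g=g_k$. For the reverse inequality, given any $W_k\in\mathcal{W}_k$, let $U_k:=\mathrm{span}(g_k,\ldots,g_n)$, which has dimension $n-k+1$; then $\dim W_k+\dim U_k=n+1>n$, so $W_k\cap U_k$ contains a nonzero vector $g$. Since its expansion only involves eigenvectors with eigenvalues $\geq\lambda_k$, the Rayleigh quotient identity yields $(Mg,g)/(g,g)\geq\lambda_k$, hence $\max_{g\in W_k}R(g)\geq\lambda_k$. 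Combining the two inequalities gives the min-max formula and identifies the minimizing subspace as the span of the first $k$ eigenvectors.

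The max-min characterization follows by applying the min-max result to $-M$, whose eigenvalues are $-\lambda_n\leq\ldots\leq-\lambda_1$, and then negating; equivalently one can mirror the previous argument, taking $\mathrm{span}(g_k,\ldots,g_n)$ as the extremizing $(n-k+1)$-dimensional subspace and intersecting an arbitrary $W_{n-k+1}$ with $\mathrm{span}(g_1,\ldots,g_k)$. For the constrained formulas in \eqref{eqminmax}, note that the orthogonality conditions $(g,g_j)=0$ for $j=1,\ldots,k-1$ cut out precisely the subspace $U_k$ from above, on which the Rayleigh quotient ranges between $\lambda_k$ and $\lambda_n$ with minimum $\lambda_k$ attained at $g_k$; the max-orthogonality formula is analogous, and the extreme cases $\lambda_1$ and $\lambda_n$ are the unconstrained versions.

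The only real obstacle is the dimension-counting step that forces $W_k\cap U_k\neq\{0\}$; everything else is bookkeeping once the Rayleigh quotient is expressed in the eigenbasis. A minor subtlety worth flagging is the case of repeated eigenvalues, where the extremizing subspaces in $\mathcal{W}_k$ are not unique but are still spanned by \emph{some} choice of orthonormal eigenvectors for $\lambda_1,\ldots,\lambda_k$; the argument above is unaffected because it only uses the ordering of eigenvalues along the chosen basis.
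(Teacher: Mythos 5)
Your proof is correct, and it is the standard argument for the Courant--Fischer--Weyl principle: spectral theorem to get an orthonormal eigenbasis, the Rayleigh quotient written as a convex combination of eigenvalues, the test subspace $\mathrm{span}(g_1,\ldots,g_k)$ for one inequality, and the dimension count $\dim W_k+\dim\mathrm{span}(g_k,\ldots,g_n)=n+1>n$ for the other. Note that the paper itself offers no proof of this theorem — it is recalled as a classical result and used only as a tool for Corollary \ref{cor:connected} — so there is no argument to compare yours against; yours fills that gap correctly. Two small points worth tightening if you write it out in full: first, the identity $(Mg,g)=\sum_i\lambda_i c_i^2$ uses self-adjointness only through the existence of the orthonormal eigenbasis, which is fine over $\R$ as the paper intends; second, your closing remark on repeated eigenvalues is the right caveat, but the stronger claim in the statement — that \emph{every} minimizing subspace is spanned by eigenvectors for $\lambda_1,\ldots,\lambda_k$ — needs a short extra argument (e.g., showing that any $k$-dimensional subspace not contained in the span of the eigenspaces for eigenvalues $\le\lambda_k$ has maximal Rayleigh quotient strictly exceeding $\lambda_k$), which your sketch asserts but does not carry out. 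This does not affect the eigenvalue formulas, which are the only part of the theorem the paper actually uses.
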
The quantity $\textrm{RQ}_M(g):=\frac{(Qg,g)}{(g,g)}$ in Theorem \ref{thm:minmax} is called the \emph{Rayleigh quotient} of $g$.\newline
	
	We can now state the following:
	
	\begin{corollary}\label{cor:connected}
		Let $\mathcal{L}$ be a random walk Laplacian on a connected hypergraph $\Gamma=(V,E)$. Assume that there exist $\mathcal{A}$ and $\mathcal{D}$ that satisfy \eqref{eq:generalRW}, with $\mathcal{A}$ symmetric and such that
		\begin{equation*}
		\mathcal{A}_{ij}=\mathcal{A}_{ji}\neq 0 \iff i\neq j \text{ and }v_i\sim v_j,
		\end{equation*}where $v_i\sim v_j$ indicates that there exists one hyperedge containing both $v_i$ and $v_j$. Then,
		\begin{enumerate}
			\item The spectrum of $\mathcal{L}$ consists of $N$ real eigenvalues that are contained in $[0,2]$.
			\item $2$ is an eigenvalue for $\mathcal{L}$ if and only if $\Gamma$ is a bipartite graph. In this case, let the vertex set be correspondingly partitioned into $V_1,V_2$, and let $f$ be an eigenfunction for the eigenvalue $2$. Then, $f$ equals some constant $c$ on $V_1$ and $-c$ on $V_2$.
			\item The eigenvalues of $\mathcal{L}$ can be characterized, with the Courant--Fischer--Weyl min-max principle, via the Rayleigh quotients
			\begin{equation*}
			\textrm{RQ}_{\mathcal{L}}(f)=\frac{\sum_{v_i\sim v_j}\mathcal{A}_{ij}\left(f(v_i)-f(v_j)\right)^2}{\sum_{i}\mathcal{D}_{ii}f(v_i)^2},
			\end{equation*}for functions $f:V\rightarrow \mathbb{R}$.
		\end{enumerate}
	\end{corollary}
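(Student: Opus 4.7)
The plan is to invoke Theorem \ref{thm:effective} to identify $\mathcal{L}$ with the normalized Laplacian of a \emph{weighted undirected} graph $\mathcal{G}$: the hypothesis $\mathcal{A}_{ij}=\mathcal{A}_{ji}\neq 0\iff v_i\sim v_j$ says exactly that $\mathcal{G}$ is undirected and has the same connectivity as $\Gamma$, so $\mathcal{G}$ is connected. Once this reduction is made, all three assertions follow from the standard theory of normalized Laplacians on weighted undirected graphs, and it suffices to record how that theory adapts here.

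For the spectral part, I would not work with $\mathcal{L}$ directly but with the conjugate $\tilde{\mathcal{L}}:=\mathcal{D}^{1/2}\mathcal{L}\mathcal{D}^{-1/2}=\id-\mathcal{D}^{-1/2}\mathcal{A}\mathcal{D}^{-1/2}$, which is symmetric since $\mathcal{A}$ is; equivalently, $\mathcal{L}$ is self-adjoint with respect to the inner product $(f,g)_V=\sum_i\mathcal{D}_{ii}f(v_i)g(v_i)$, mimicking the graph case in Section \ref{rgr}. This already gives assertion (1)'s reality of the spectrum. For assertion (3), I would compute the associated quadratic form by direct expansion:
\begin{equation*}
(\mathcal{L}f,f)_V=\sum_i\mathcal{D}_{ii}f(v_i)^2-\sum_{i,j}\mathcal{A}_{ij}f(v_i)f(v_j),
\end{equation*}
and then use the row-sum identity $\mathcal{D}_{ii}=\sum_j\mathcal{A}_{ij}$ together with the symmetry of $\mathcal{A}$ to rewrite the first sum as $\tfrac12\sum_{i,j}\mathcal{A}_{ij}(f(v_i)^2+f(v_j)^2)$, so that the quadratic form telescopes into $\sum_{v_i\sim v_j}\mathcal{A}_{ij}(f(v_i)-f(v_j))^2$ over unordered adjacent pairs. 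Dividing by $(f,f)_V$ and applying Theorem \ref{thm:minmax} yields the Rayleigh quotient characterisation in (3).

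Having the Rayleigh quotient in hand, the bounds in (1) are quick: the numerator is non-negative, so $\lambda\geq 0$, and the pointwise estimate $(f(v_i)-f(v_j))^2\leq 2(f(v_i)^2+f(v_j)^2)$ combined with the row-sum identity bounds the numerator by $2\sum_i\mathcal{D}_{ii}f(v_i)^2$, giving $\lambda\leq 2$. The constant function realises $\lambda=0$. For (2), which I expect to be the most delicate step, I would chase the equality case of the previous estimate: if $f$ is an eigenfunction for eigenvalue $2$, then $(\mathcal{L}f,f)_V=2(f,f)_V$ forces $f(v_i)=-f(v_j)$ for every adjacent pair $v_i\sim v_j$ in $\Gamma$. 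Since $\Gamma$ is connected, propagating this relation along a spanning walk from a fixed basepoint forces $|f|$ to be a common constant $c$ and partitions $V$ according to the sign of $f$, yielding a bipartition of $\Gamma$ on which $f$ takes the values $\pm c$. Conversely, if $\Gamma$ is bipartite with classes $V_1,V_2$, the function taking values $+1$ on $V_1$ and $-1$ on $V_2$ saturates the upper bound, hence is an eigenfunction for eigenvalue $2$; this establishes (2) and completes the proof.
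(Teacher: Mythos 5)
Your proof is correct and follows the same route as the paper: the paper's entire proof is to invoke Theorem \ref{thm:effective} and then cite the standard spectral theory of normalized Laplacians on weighted undirected graphs, which is exactly the reduction you make before supplying those standard arguments (self-adjointness with respect to $(f,g)_V=\sum_i \mathcal{D}_{ii}f(v_i)g(v_i)$, the quadratic-form identity, and the equality case of the estimate) yourself. The only detail worth adding to part (2) is that bipartiteness of the effective graph forces every hyperedge to have cardinality two\,---\,a hyperedge of size at least three yields a triangle under the adjacency hypothesis\,---\,which is why the corollary can assert that $\Gamma$ is a bipartite \emph{graph}.
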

	\begin{proof}
		It follows from Theorem \ref{thm:effective} and from the spectral properties of the normalized Laplacian for weighted undirected graphs in \cite{Chung,butler2006spectral}.
	\end{proof}
	
	We conclude this section by discussing, in details, the maximum principles for  Laplacians. They hold for a class of Laplacians that include the random walk Laplacians, but for instance also the algebraic Laplacians often employed in graph theory. 
	Let $G$ be an $N\times N$ diagonal matrix with positive diagonal entries, and $A$ be an $N\times N$ non-negative matrix, such that
	\begin{equation}\label{max1}
	G_{ii}=\sum_j A_{ij} \quad \text{ for every } i.
	\end{equation}
	We define the corresponding Laplacian
	as
	\begin{equation*}\label{max2}
	\Lambda = G -A,   
	\end{equation*}
	that is, 
	\begin{equation}\label{max3}
	\Lambda f(v_i)= G_{ii} f(v_i)-\sum_j A_{ij} f(v_j).
	\end{equation}
	This includes the random walk Laplacians of the form \eqref{RWL}. We put $G_{ii}=1$, $A_{ij}=\frac{\mathcal{A}_{ij}}{\mathcal{D}_{ii}}$ and recall \eqref{eq:AD}. It also includes the corresponding non-normalized Laplacian, putting $G_{ii}=\mathcal{D}_{ii}$, $A_{ij}=\mathcal{A}_{ij}$.
	\begin{proposition}\label{prop7.1}
		If
		\begin{equation*}\label{max4}
		\Lambda f_0=0,    
		\end{equation*}
		then $f_0$ is constant on every connected component of $\Gamma$.
	\end{proposition}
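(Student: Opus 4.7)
The plan is to run a standard discrete maximum-principle argument, adapted to the general setting $\Lambda = G - A$ with $G_{ii}=\sum_j A_{ij}$. The guiding idea is that zero-row-sum operators with nonnegative off-diagonal structure force harmonic functions to be locally averaging, so they cannot exceed their neighbours.

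First, I would fix a connected component $C$ of $\Gamma$ and let $M := \max_{v_i \in C} f_0(v_i)$, picking some $v_{i_0} \in C$ with $f_0(v_{i_0}) = M$. Using the hypothesis $G_{i_0 i_0} = \sum_{j} A_{i_0 j}$, I rewrite $\Lambda f_0(v_{i_0}) = 0$ as
\begin{equation*}
0 \;=\; G_{i_0 i_0} f_0(v_{i_0}) - \sum_{j} A_{i_0 j} f_0(v_j) \;=\; \sum_{j} A_{i_0 j}\bigl(f_0(v_{i_0}) - f_0(v_j)\bigr).
\end{equation*}
Each summand is a product of the nonnegative weight $A_{i_0 j}$ with the nonnegative quantity $M - f_0(v_j)$, so every term must vanish. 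Consequently $f_0(v_j) = M$ for every $j$ such that $A_{i_0 j} > 0$, i.e., for every neighbour of $v_{i_0}$ in the effective underlying graph of Theorem~\ref{thm:effective}.

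Next, I would propagate this equality through the whole component $C$. By the convention adopted for random walk Laplacians, $A_{ij} > 0$ precisely when $v_i$ and $v_j$ share a hyperedge, so the effective underlying graph has the same connected components as $\Gamma$. Hence for any $v_k \in C$ there is a path $v_{i_0} = v_{j_0}, v_{j_1}, \ldots, v_{j_m} = v_k$ with $A_{j_{r-1} j_r} > 0$ at every step. Applying the same local argument at each $v_{j_r}$—which is also a maximiser once equality to $M$ is established—inductively yields $f_0(v_{j_r}) = M$ for all $r$, and therefore $f_0 \equiv M$ on $C$.

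The only delicate point, and the main place where care is needed, is the identification of the graph in which one propagates along. The statement speaks of connected components of $\Gamma$, while the algebraic argument sees only the sparsity pattern of $A$; the link between the two is precisely the content of Theorem~\ref{thm:effective} together with the assumption that the random walk has nonzero transition probability between two vertices iff they share a hyperedge. For the more general algebraic Laplacians mentioned just before the proposition the statement should be understood with ``connected component'' referring to the graph induced by the positivity pattern of $A$, and the same argument applies verbatim.
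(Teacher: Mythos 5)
Your argument is correct and is essentially the paper's own proof: both run the discrete maximum principle, concluding from $G_{ii}=\sum_j A_{ij}$ and $A_{ij}\ge 0$ that a maximiser's value propagates to all $j$ with $A_{ij}>0$ and hence, by iteration along the positivity pattern of $A$, through the whole component. Your version is marginally tidier in taking the maximum over each component separately and in making explicit the (correct) caveat that ``connected component'' must be read with respect to the graph induced by the positivity of $A$, which for random walk Laplacians agrees with that of $\Gamma$.
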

	\begin{proof}
		Let $v_i$ be a vertex where $f_0$ achieves its maximum. Then
		\begin{equation*}\label{7.4}
		f_0(v_i)\ge \sum_{v_j \sim v_i}\frac{{A}_{ij}}{G_{ii}}f_0(v_j), 
		\end{equation*}
		and because of \eqref{max1} and the non-negativity of the $A_{ij}$, we can have equality only if $f_0(v_j)=f_0(v_i)$ for every $j$ with ${A}_{ij}>0$. By \eqref{max3}, $f_0$ therefore is also maximal on every vertex $v_j$ connected with $v_i$. By iteration, it is then  constant on the component of $v_i$. This yields the claim, since $f$ has to assume a maximum at some vertex of the finite graph $\Gamma$.
	\end{proof}
	When we look at Dirichlet (or other) boundary value problems, e.g., consider a random walker on a (hyper)graph with boundary, that is, we select some boundary vertex set $V_0\subsetneq V$, with $ V\backslash V_0$ being connected,  the corresponding equation 
	\begin{equation}\label{7.6}
	\Lambda f(v)=0
	\end{equation}   
	would be required only for $v\in V\backslash V_0$. 
	A solution $f$ of \eqref{7.6} is called \emph{harmonic}.
	\begin{corollary}\label{cor7.1}
		When a solution $f$ of \eqref{7.6} achieves a local maximum or minimum at some $v\in  V\backslash V_0$, it is constant. 
	\end{corollary}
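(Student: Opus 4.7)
The plan is to re-run the non-negative-sum argument from the proof of Proposition~\ref{prop7.1}, but starting the propagation at the interior extremum rather than at a global one, and letting it terminate harmlessly on the boundary $V_0$.

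Concretely, I would first suppose that $f$ attains a local maximum at $v\in V\setminus V_0$, so that $f(v)\ge f(v_j)$ for every neighbor $v_j$ of $v$. Since $v$ is interior, equation \eqref{7.6} applies at $v$, and together with \eqref{max1} it rearranges to
\begin{equation*}
0=\Lambda f(v)=\sum_j A_{vj}\bigl(f(v)-f(v_j)\bigr).
\end{equation*}
Every summand is non-negative by the local-maximum hypothesis and the non-negativity of the entries $A_{vj}$, so each must vanish individually; hence $f(v_j)=f(v)=:M$ for every neighbor $v_j$ of $v$.

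The next step is to propagate this equality through the connected interior. For any interior neighbor $v_j$ of $v$, the inherited value $M$ lets me re-apply the same identity at $v_j$, forcing $f\equiv M$ on its neighbors too. Iterating along paths within $V\setminus V_0$, which is connected by hypothesis, covers every interior vertex, and one further round of the identity deposits the same value $M$ on every boundary vertex adjacent to the interior. Hence $f$ is constant on the portion of $V$ reachable from $v$ through the interior. The minimum case follows by applying this argument to $-f$, which still solves $\Lambda(-f)=0$ by linearity of $\Lambda$.

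The main technical obstacle is justifying the inductive step, since local maximality at $v$ only bounds $f$ in a one-hop neighborhood of $v$. I would close this by a minimal-counterexample argument: assume toward contradiction that some interior vertex $u$ has $f(u)\ne M$, and choose such a $u$ at minimal graph-distance $d$ from $v$ within $V\setminus V_0$. Its predecessor $u'$ on a shortest path lies in $V\setminus V_0$ at distance $d-1$, so $f(u')=M$ by the minimal choice of $d$; likewise every neighbor of $u'$ at distance $<d$ from $v$ satisfies $f=M$. The non-negative-sum identity applied at $u'$, together with the inductively propagated bounds $f\le M$ on the remaining neighbors, then forces $f(u)=M$, contradicting the choice of $u$. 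This closes the propagation and yields the corollary.
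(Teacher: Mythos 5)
Your opening step is correct and is exactly the mechanism of Proposition~\ref{prop7.1}: at an interior vertex $v$ where $f(v)\ge f(v_j)$ for all neighbours, \eqref{7.6} and \eqref{max1} give $0=\sum_j A_{vj}\bigl(f(v)-f(v_j)\bigr)$ with non-negative summands, so $f\equiv f(v)=:M$ on the one-hop neighbourhood of $v$. The genuine gap is in the propagation. In the paper's argument the vertex $v$ realizes the \emph{global} maximum over the finite vertex set, so every neighbour that inherits the value $M$ is again a global maximizer and the identity re-applies there; this self-reproducing property is exactly what drives the iteration. A merely \emph{local} maximum does not reproduce itself: a neighbour $v_j$ with $f(v_j)=M$ may have further neighbours where $f>M$. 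Your minimal-counterexample induction does not repair this, because the phrase ``together with the inductively propagated bounds $f\le M$ on the remaining neighbours'' invokes bounds that have never been established --- the induction on the distance $d$ only controls the \emph{values} at interior vertices at distance $<d$, it gives no upper bound at distance $d$, and it says nothing at all about neighbours of $u'$ lying in $V_0$, where $f$ is not constrained by \eqref{7.6}. Without those bounds the summands in the identity at $u'$ have no definite sign and you cannot force $f(u)=M$. The appeal to $-f$ for the minimum case is fine, but it inherits the same gap.

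The obstruction is real, not just a presentational issue. Take $V=\{v,a,p,q\}$ with edges $va$, $ap$, $aq$, unit weights, $V_0=\{p,q\}$ (so $V\setminus V_0=\{v,a\}$ is connected), and
\begin{equation*}
f(v)=f(a)=M,\qquad f(p)=M+1,\qquad f(q)=M-1.
\end{equation*}
Then $\Lambda f(v)=f(v)-f(a)=0$ and $\Lambda f(a)=3f(a)-f(v)-f(p)-f(q)=0$, the interior vertex $v$ is a (weak) local maximum of $f$, yet $f$ is not constant. So no argument starting only from local maximality at $v$ can reach the conclusion. The statement has to be read, as the paper's one-line proof implicitly does, with the maximum taken over all of $V$ (boundary included): if $f(v)=\max_{w\in V}f(w)$ for some $v\in V\setminus V_0$, then every vertex inheriting the value $M$ is again a global maximizer, your non-negative-sum identity applies at each interior such vertex in turn, and the propagation of Proposition~\ref{prop7.1} goes through verbatim on the connected set $V\setminus V_0$ and one step into $V_0$. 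You should either adopt that reading explicitly or add a hypothesis ruling out examples like the one above; as written, your induction does not close.
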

	\begin{proof}
		This follows from the proof of Proposition \ref{prop7.1}.
	\end{proof}

	\section{The chemical Laplacian}\label{section:orientedL}
	
	We now recall the normalized Laplacian for oriented hypergraphs introduced in \cite{JM}, that we shall call the \emph{chemical Laplacian}, since it was introduced with the idea of studying chemical reaction networks. In contrast to the random walk and diffusion Laplacians discussed above that relate individual vertices, it relates subsets of vertices, the inputs and the outputs of oriented hyperedges. As introduced in Section \ref{rhyp}, in an oriented hypergraph, the hyperedges have inputs and outputs, but in contrast to directed hypergraphs, it is assumed that both \emph{directions} are present. Hence, the relations between inputs and outputs of an hyperedge are symmetric, and because of that symmetry, the spectrum remains real.
	
	
	Fix an oriented hypergraph $\Gamma=(V,E)$. The chemical Laplacian is based on  the \emph{boundary operator}
	\begin{equation*}
	\delta f(e):=\sum_{v_i \text{ input of }e}f(v_i)-\sum_{v_j \text{ output of }e}f(v_j)
	\end{equation*}
	for a function $f:V\to \R$, as well as on its adjoint operator
	\begin{equation}\label{d2}
	\delta^*(\gamma)(v)=\frac{\sum_{e_{\text{in}}: v\text{ input}}\gamma(e_{\text{in}})-\sum_{e_{\text{out}}: v\text{ output}}\gamma(e_{\text{out}})}{t(v)},
	\end{equation}
	for functions $\gamma$ on oriented hyperedges, where the degree of a vertex $v$ is
	\begin{equation*}
	t(v):=|e\in E\,:\,v\in e|.
	\end{equation*}
	The underlying scalar products
	are
	\begin{equation*}\label{d3}
	(f,g)_V:=\sum_{v\in V}t(v)\cdot f(v)\cdot g(v)
	\end{equation*}
	for  $f,g:V\rightarrow\mathbb{R}$, and 
	\begin{equation*}\label{d4}
	(\omega,\gamma)_E:=\sum_{e\in E}\omega(e)\cdot\gamma(e)
	\end{equation*}
	or $\omega,\gamma:E\rightarrow\mathbb{R}$. The resulting chemical Laplace
	operator then is
	\begin{eqnarray}\nonumber
	L^of(v):=&\delta^*\delta(v)\\
	\nonumber
	=&\frac{\sum_{e_{\text{in}}: v\text{ input}}\biggl(\sum_{v' \text{ input of }e_{\text{in}}}f(v')-\sum_{w' \text{ output of }e_{\text{in}}}f(w')\biggr)}{t(v)}\\
	&-\frac{\sum_{e_{\text{out}}: v\text{ output}}\biggl(\sum_{\hat{v} \text{input of }e_{\text{out}}}f(\hat{v})-\sum_{\hat{w} \text{ output of }e_{\text{out}}}f(\hat{w})\biggr)}{t(v)}.\label{d5}
	\end{eqnarray}
	There is still some degree of freedom in the choice of normalization in \eqref{d2} and therefore also in \eqref{d5}. Thus, one may consider variants with different normalizations, but this does not affect its essential conceptual and mathematical properties.\newline
	The rich dynamics generated by coupling various dynamical systems on hypergraphs via
	this operator have been explored in \cite{Master-Stability} and in \cite{BKJM}. Moreover, a slight generalization of this operator has been offered in \cite{JM21b} and its spectral theory has been recently applied, in \cite{RM-bio}, for studying networks of genetic expression.\newline
	As shown in Theorem \ref{thm:effective}, the random walk Laplacians that we have defined all can be represented as Laplacians on weighted graphs, as the transitions happens between vertices and the probabilities depend on how these vertices are connected. This is no longer true for the  chemical  Laplacian $L^o$ which encodes relations between sets of vertices. It therefore does not possess an interpretation in terms of random walks or diffusion processes between vertices.  While  $L^o$ therefore does not capture any random walk on the hypergraph, it seems to capture some other hypergraph properties in a more precise way than a random walk Laplacian $\mathcal{L}$. For instance, if the assumptions of Corollary \ref{cor:connected} are satisfied,
	\begin{itemize}
		\item The eigenvalues of $\mathcal{L}$ are contained in $[0,2]$, as in the case of graphs, while the eigenvalues of $L^o$ are\,---\,more generally\,---\, contained in $[0,\Psi]$, where $\Psi$ is the largest hyperedge cardinality.
		\item The largest eigenvalue of $\mathcal{L}$ attains its largest possible value $2$ if and only if $\Gamma$ is a bipartite \emph{graph}, while the largest eigenvalue of $L^o$ attains its largest possible value $\Psi$ if and only if $\Gamma$ is $\Psi$-uniform (meaning that all its hyperedges have cardinality $\Psi$) and it is a \emph{bipartite hypergraph}, as shown in \cite{Sharp}.
		\item The eigenvalues of $\mathcal{L}$ are characterized via the Rayleigh quotients
		\begin{equation*}
		\textrm{RQ}_{\mathcal{L}}(f)=\frac{\sum_{v_i\sim v_j}\mathcal{A}_{ij}\left(f(v_i)-f(v_j)\right)^2}{\sum_{i}\mathcal{D}_{ii}f(v_i)^2},
		\end{equation*}while the eigenvalues of $L^o$ are characterized via the Rayleigh quotients
		\begin{equation*}
		\textrm{RQ}_{L^o}(f)=\frac{\sum_{e\in E}\left(\sum_{v_i\in e_{in}}f(v_i)-\sum_{v_j\in e_{out}}f(v_j)\right)^2}{\sum_{i}t(v_i)f(v_i)^2},
		\end{equation*}as shown in \cite{JM}. By looking at these quantities, it becomes clear that the Rayleigh quotients for $L^o$ are better capturing the hypergraph structure.
		\item While we can recover the properties of $\mathcal{L}$ from the theory of random walks on weighted graphs, nothing similar can be done for the chemical  Laplacian $L^o$. We cannot recover the properties of $L^o$ by constructing an auxiliary weighted graph. This is clear, for instance, from the fact that the spectrum of $L^o$ for a hypergraph may be contained in a larger interval than the spectrum of the Laplacian for a graph.
	\end{itemize}
	We also observe that a random walk Laplacian has $0$ as its smallest eigenvalue, and if, as we always assume, the hypergraph is connected, the only eigenfunctions for that eigenvalue are the constants. This follows directly from Theorem \ref{thm:effective} and from the well-known fact that, for a connected graph, the only eigenfunctions of the eigenvalue $0$ are the constants \cite{Chung}. In contrast, as shown in \cite{JM}, there exist connected hypergraphs where $L^o$ does not possess the eigenvalue $0$, and others where $L^o$ has non-constant eigenfunctions for that eigenvalue, thus violating the maximum principle in Proposition \ref{prop7.1}. This shows again, that $L^o$ cannot be interpreted as a random walk Laplacian. More generally, Proposition \ref{prop7.1} tells us that, since  $L^0$  does not  satisfy a maximum principle, it cannot be reduced to a non-normalized, e.g., algebraic graph Laplacian either.\\
	In fact, from \eqref{d5}, we can easily infer the geometric meaning of $L^0$. For an oriented hyperedge, it compares what flows in at the input vertices  of a hyperedge and what flows out at its output vertices  with what flows out at the input vertices and what flows in at the output vertices. In particular, all the input vertices  of a hyperedge contribute in parallel, and so do all its output vertices. Therefore, when the input set or the output set  contains more than one vertex, the maximum principle need not hold. Consider, for instance, a hypergraph with a single oriented hyperedge with $v_1$, $v_2$ as input vertices and $v_3$ as output vertex. Then any $f$ with  $f(v_1)+f(v_2)=f(v_3)$ satisfies $L^0f=0$. Changing the orientation of the hyperedge does not affect this.\medskip
	
	As a next step, it is naturally important to go beyond static properties and investigate the impact that a choice of hypergraph Laplacian can make on dynamics.
	
	\section{Coupled Hypergraph Maps}\label{section:CHM}
	
	In \cite{BKJM}, \emph{Coupled Hypergraph Maps} 
	(CHMs) were defined as a generalization of the well-known Coupled Map Lattices (CMLs)~\cite{Kaneko}, which are discrete-time dynamical systems on graphs. CHMs can be described as follows. Given a connected hypergraph $\Gamma=(V,E)$ on $N$ nodes, we assume that each node $i$ evolves according to a time-discrete map $f:\mathcal{I}\rightarrow\mathcal{I}$, where $\mathcal{I}$ is a compact interval. Two key examples are: the \emph{logistic map} given by
	\begin{equation*}
	g(x):=\mu x(1-x),\qquad \mu\in(0,4],
	\end{equation*}
	and the \emph{tent map} defined by
	\begin{equation*}
	f(x):=\frac{\mu}{2}\min\{x,1-x\},\qquad \mu\in(0,4].
	\end{equation*}
	Note carefully that the parametric assumption $\mu\in(0,4]$ entails that both maps leave the interval $\mathcal{I}=[0,1]$ \emph{invariant} so that they yield well-defined dynamical systems defined in forward discrete time via iteration of the map, e.g., $x_{t+1}=f(x_t)$. Both maps are unimodal with a single global maximum on $[0,1]$. In particular, one can view the tent map as an analytically more tractable variant of the logistic map, so we focus on the tent map in this work, which can already show complicated periodic and chaotic behaviour~\cite{AlligoodSauerYorke}. To represent complex network dynamics, it is natural to study coupled iterated maps~\cite{Kaneko}. Given a vertex $i$ and a time $t\in \mathbb{N}$, we let $x_t(i)\in\mathbb{R}$ denote the state of node $i$ at time $t$, and we assume that $x_t(i)$ is updated via the main dynamical iteration rule
	\begin{equation}
	\label{eq:CHMs}
	x_{t+1}(i)=f(x_t(i))-\varepsilon (\mathcal{O}f)(x_t(i)),
	\end{equation}
	where for classical CMLs one uses the graph Laplacian, while for CHMs $\mathcal{O}$ is a hypergraph Laplacian associated with $\Gamma$, and $\varepsilon$ is a parameter controlling the coupling strength. We can also write \eqref{eq:CHMs} more compactly using the vector notation $\mathbf{x}_t:=(x_t(1),\ldots,x_t(N))$ and by setting
	\begin{equation*}
	C:=C(\mathcal{O})=\id-\varepsilon\mathcal{O},
	\end{equation*} 
	so that
	\begin{equation}
	\label{eq:CHMs2}
	\mathbf{x}_{t+1}=(Cf)(\mathbf{x}_t),
	\end{equation}
	where we now understand the action of $f$ as component-wise extension if it takes a vector, i.e.\ $f(\mathbf{x}_t)=(f(x_t(1)),\ldots,f(x_t(N)))$. Ideally, we would like to analyze the dynamics of~\eqref{eq:CHMs2} by generalizing techniques from the classical theory of one-dimensional iterated maps such as the logistic or tent map~\cite{ColletEckmann2,DeMeloVanStrien,AlligoodSauerYorke}. One important technique are is to characterize invariance properties of intervals. Recall that we know  $f(\mathcal{I})\subseteq \mathcal{I}$. In fact, one can often even make finer statements regarding proper sub-intervals $f(\mathcal{I}_{s_1})\subseteq \mathcal{I}_{s_2}$, where $\mathcal{I}_{s_1}, \mathcal{I}_{s_2} \subset \mathcal{I}$. This forms the basis of symbolic dynamics, i.e., coding the behaviour of the iterated map purely in terms of recording via symbols~\cite{Robinson3}, in which sets a trajectory lands at each time step. A second important technique is renormalization~\cite{Feigenbaum}, which can be used to study the full bifurcation diagram of the tent map upon parameter variation of $\mu$. Renormalization is a more global technique not tracking individual trajectories but it exploits the self-similarity of the map. For example, the $k$-th order iterates of the graph of the tent map correspond (in this case exactly, for the logistic map only locally) to $k$ horizontally compressed and vertically stretched copies of the tent map. This self-similarity can be made precise using a renormalization group (RG) operator on the entire parametric family of maps. For CMLs, it is known that the RG approach can be generalized on a formal level~\cite{LemaitreChate,LemaitreChate1}. Yet, additional problems already appear for CMLs, where one eventually needs commutation properties. More precisely, iterating a shifted graph Laplacian should commute with iterating the map on suitable sub-intervals. To demonstrate that our framework provides some partial steps to analyze the dynamics of CHMs, we want to to check, which properties of the map $f$ are inherited by the operator $C$. In particular, we consider two important natural questions:
	
	\begin{enumerate}
		\item When does it hold that $Cf(\mathbf{x})\in\mathcal{I}^N$, i.e., under which conditions does the dynamics \eqref{eq:CHMs2} leave the $N$-dimensional hypercube $\mathcal{I}^N$ invariant?
		\item Consider the map $C\circ f$ defining the dynamics, say for the tent map. Does it hold true that
		\begin{equation}\label{eq:Q7}
		(C\circ f)^2|_{\mathcal{I}_a}=C^2\circ f^2|_{\mathcal{I}_a},
		\end{equation}for some non-trivial sub-interval $\mathcal{I}_a\subset \mathcal{I}$, i.e.\ do we have some form of commutativity?
	\end{enumerate} 
	
	In order to further develop the mathematical foundations of CHMs, we answer the above questions for the hypergraph Laplacians that have been discussed in the previous sections. 
	
	\begin{theorem}\label{thm:epsilon}
		Let $\mathcal{I}=[a,b]$, with $0\leq a<b$. Let $\mathcal{L}$ be an $N\times N$ random walk Laplacian and let $C:=\id-\varepsilon\mathcal{L}$. Then, given $\mathbf{x}\in \mathcal{I}^N$,
		\begin{equation*}
		Cf(\mathbf{x})\in \mathcal{I}^N \text{ for each }f:\mathcal{I}\rightarrow\mathcal{I}\iff \varepsilon\in [0,1].
		\end{equation*}Moreover, given $f:\mathcal{I}\rightarrow\mathcal{I}$, if $\varepsilon >1$ then
		\begin{equation*} Cf(\mathbf{x})\in [b(1-\varepsilon)+a\varepsilon,a(1-\varepsilon)+b\varepsilon]^N.
		\end{equation*}If $\varepsilon <0$, then
		\begin{equation*} Cf(\mathbf{x})\in [a(1-\varepsilon)+b\varepsilon,b(1-\varepsilon)+a\varepsilon]^N.
		\end{equation*}
	\end{theorem}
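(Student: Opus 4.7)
The approach is to rewrite the coupling operator in terms of the row-stochastic matrix $P := \mathcal{D}^{-1}\mathcal{A}$. Recall that $P$ has non-negative entries, rows summing to $1$ (by \eqref{eq:AD}), and vanishing diagonal, since a random walker moves to a distinct vertex. With $\mathcal{L} = \id - P$, the coupling operator becomes
\begin{equation*}
C \;=\; \id - \varepsilon\mathcal{L} \;=\; (1-\varepsilon)\,\id + \varepsilon\,P,
\end{equation*}
so that, for $\mathbf{y} = f(\mathbf{x}) \in [a,b]^N$,
\begin{equation*}
(C\mathbf{y})_i \;=\; (1-\varepsilon)\,y_i + \varepsilon\,(P\mathbf{y})_i,
\end{equation*}
and $(P\mathbf{y})_i$ is a convex combination of the $y_j$'s and therefore itself lies in $[a,b]$. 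The entire theorem then reduces to the elementary question: what is the range of the affine form $(1-\varepsilon)u+\varepsilon v$ as $u,v$ range independently over $[a,b]$?

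For $\varepsilon \in [0,1]$, both coefficients are non-negative and sum to $1$, so this is an honest convex combination and remains in $[a,b]$, giving $Cf(\mathbf{x}) \in \mathcal{I}^N$ for any $f:\mathcal{I}\to\mathcal{I}$. For $\varepsilon > 1$ one has $1-\varepsilon<0<\varepsilon$, so the affine form is minimized at $(u,v)=(b,a)$ and maximized at $(u,v)=(a,b)$, yielding the stated interval $[b(1-\varepsilon)+a\varepsilon,\, a(1-\varepsilon)+b\varepsilon]$. For $\varepsilon < 0$ the signs of the coefficients swap, the extremizing corners swap too, and the other interval is obtained.

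For the converse direction of the iff, I will exhibit a configuration showing the above containments are tight when $\varepsilon \notin [0,1]$. Taking $f$ equal to the identity on $\mathcal{I}$ and any vertex index $i$, set $x_i = b$ and $x_j = a$ for every $j \neq i$ with $P_{ij}>0$; connectedness of $\Gamma$ ensures such $j$'s exist, and the vanishing diagonal of $P$ guarantees that these two prescriptions do not conflict. Then $y_i = b$ while $(P\mathbf{y})_i = a\sum_j P_{ij} = a$, so $(Cf(\mathbf{x}))_i = b(1-\varepsilon)+a\varepsilon$, which is strictly less than $a$ whenever $\varepsilon > 1$. A symmetric choice $x_i=a$, $x_j=b$ handles $\varepsilon<0$. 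The whole argument is essentially a one-variable optimization once the identity $C = (1-\varepsilon)\id + \varepsilon P$ is in place, so I foresee no serious obstacle; the only structural point to keep track of is the zero diagonal of $P$, which is precisely what lets the necessity argument drive $y_i$ and $(P\mathbf{y})_i$ to opposite endpoints of $[a,b]$.
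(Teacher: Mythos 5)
Your proof is correct and follows essentially the same route as the paper's: the paper likewise writes $C=(1-\varepsilon)\id+\varepsilon\,\mathcal{D}^{-1}\mathcal{A}$, uses the row-sum identity \eqref{eq:AD} to bound $(Cf(\mathbf{x}))_i$ between $\min_k f(x_k)$ and $\max_k f(x_k)$ for $\varepsilon\in[0,1]$, and for necessity evaluates the same affine form at an extreme corner (it takes $f(x_i)=a$, $f(x_j)=b$, which handles $\varepsilon>1$ and $\varepsilon<0$ simultaneously, whereas you swap corners between the two cases). Your explicit packaging via the stochastic matrix $P$ and convex combinations is only a cosmetic streamlining, not a different argument.
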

	Informally speaking, the first result simply follows from the averaging property of random walk Laplacians.
	\begin{proof}
		Fix two matrices $\mathcal{A}$ and $\mathcal{D}$ that satisfy \eqref{eq:generalRW}, so that $\mathcal{L}=\id-\mathcal{D}^{-1}\mathcal{A}$. Then, by \eqref{eq:AD},
		\begin{equation*}
		\sum_{j}\frac{\mathcal{A}_{ij}}{\mathcal{D}_{ii}}=1,
		\end{equation*}for each $i$. Also, $ C=\id-\varepsilon \id +\varepsilon \mathcal{D}^{-1}\mathcal{A}$, therefore
		\begin{equation*}
		C_{ij}=\begin{cases}
		1-\varepsilon &\text{if }i=j\\
		\frac{\varepsilon \mathcal{A}_{ij}}{\mathcal{D}_{ii}} &\text{if }i\neq j.
		\end{cases}
		\end{equation*}Hence, $Cf(\mathbf{x})\in \mathcal{I}^N$ if and only if
		\begin{equation}\label{eQ1}
		a\leq f(x_i) (1-\varepsilon)+\sum_{j\neq i}f(x_j)\cdot\frac{\varepsilon \mathcal{A}_{ij}}{\mathcal{D}_{ii}}\leq b,
		\end{equation}for each $i=1,\ldots,N$.\newline 
		
		If $\varepsilon \in [0,1]$, then
		\begin{equation*}
		f(x_i) (1-\varepsilon)+\sum_{j\neq i}f(x_j)\cdot\frac{\varepsilon \mathcal{A}_{ij}}{\mathcal{D}_{ii}}\leq \max_{k}f(x_k)\cdot\left(1-\varepsilon+\varepsilon \sum_{j\neq i}\frac{\mathcal{A}_{ij}}{\mathcal{D}_{ii}}\right)=\max_{k}f(x_k)
		\end{equation*}and similarly
		\begin{equation*}
		f(x_i) (1-\varepsilon)+\sum_{j\neq i}f(x_j)\cdot\frac{\varepsilon \mathcal{A}_{ij}}{\mathcal{D}_{ii}}\geq \min_{k}f(x_k)\cdot\left(1-\varepsilon+\varepsilon \sum_{j\neq i}\frac{\mathcal{A}_{ij}}{\mathcal{D}_{ii}}\right)=\min_{k}f(x_k),
		\end{equation*}hence
		\begin{equation*}\label{eq:minmax}
		a\leq \min_{k}f(x_k) \leq  \bigl(C f(\mathbf{x})\bigr)_i \leq \max_{k}f(x_k)\leq b.
		\end{equation*}Therefore \eqref{eQ1} holds true.\newline
		
		If $\varepsilon\notin [0,1]$, assume that $f(x_i)=a$ while $f(x_j)=b$ for all $j\neq i$. Then,
		\begin{equation*}
		\bigl(C f(\mathbf{x})\bigr)_i=a(1-\varepsilon)+b\varepsilon
		\end{equation*}is not in $\mathcal{I}$. In fact,
		\begin{itemize}
			\item If $\varepsilon>1$, then 
			\begin{equation*}
			a(1-\varepsilon)+b\varepsilon>b(1-\varepsilon)+b\varepsilon=b.
			\end{equation*}
			\item If $\varepsilon<0$, then 
			\begin{equation*}
			a(1-\varepsilon)+b\varepsilon<a(1-\varepsilon)+a\varepsilon=a.
			\end{equation*}
		\end{itemize}In both cases,
		\begin{equation*}
		\bigl(C f(\mathbf{x})\bigr)_i=a(1-\varepsilon)+b\varepsilon\notin \mathcal{I}.
		\end{equation*}This proves the first claim.\newline
		
		Now, if $\varepsilon >1$, then $1-\varepsilon< 0$. Therefore, for each $i$,
		\begin{equation*}
		f(x_i) (1-\varepsilon)+\sum_{j\neq i}f(x_j)\cdot\frac{\varepsilon \mathcal{A}_{ij}}{\mathcal{D}_{ii}}\leq a(1-\varepsilon)+b \varepsilon \left(\sum_{j\neq i}\frac{ \mathcal{A}_{ij}}{\mathcal{D}_{ii}}\right)=a(1-\varepsilon)+b\varepsilon,
		\end{equation*}and similarly
		\begin{equation*}
		f(x_i) (1-\varepsilon)+\sum_{j\neq i}f(x_j)\cdot\frac{\varepsilon \mathcal{A}_{ij}}{\mathcal{D}_{ii}}\geq b(1-\varepsilon)+a \varepsilon \left(\sum_{j\neq i}\frac{ \mathcal{A}_{ij}}{\mathcal{D}_{ii}}\right)= b(1-\varepsilon)+a\varepsilon.
		\end{equation*}If $\varepsilon<0$, then $1-\varepsilon>1$. Therefore
		\begin{equation*}
		f(x_i) (1-\varepsilon)+\sum_{j\neq i}f(x_j)\cdot\frac{\varepsilon \mathcal{A}_{ij}}{\mathcal{D}_{ii}}\leq b(1-\varepsilon)+a \varepsilon \left(\sum_{j\neq i}\frac{ \mathcal{A}_{ij}}{\mathcal{D}_{ii}}\right)=b(1-\varepsilon)+a\varepsilon,
		\end{equation*}and
		\begin{equation*}
		f(x_i) (1-\varepsilon)+\sum_{j\neq i}f(x_j)\cdot\frac{\varepsilon \mathcal{A}_{ij}}{\mathcal{D}_{ii}}\geq a(1-\varepsilon)+b\varepsilon \left(\sum_{j\neq i}\frac{ \mathcal{A}_{ij}}{\mathcal{D}_{ii}}\right)=a(1-\varepsilon)+b\varepsilon.
		\end{equation*}Hence, if $\varepsilon>1$,
		\begin{equation*}
		Cf(\mathbf{x})\in [b(1-\varepsilon)+a \varepsilon,a(1-\varepsilon)+b \varepsilon]^N.
		\end{equation*}If $\varepsilon<0$, 
		\begin{equation*}
		Cf(\mathbf{x})\in [a(1-\varepsilon)+b \varepsilon,b(1-\varepsilon)+a \varepsilon]^N.
		\end{equation*}
	\end{proof}
	\begin{remark}
		If, instead of a random walk Laplacian, we consider the chemical  Laplacian $L^o$ that we discussed in Section \ref{section:orientedL}, or its variant in \cite{BKJM}, Theorem \ref{thm:epsilon} does not hold. To see this, we fix an oriented hypergraph $\Gamma=(V,E)$ with nodes $v_1,\ldots,v_N$ and we write $L^o$ in matrix form as
		\begin{equation*}
		L^o=\id - T^{-1}A^o,
		\end{equation*}where $T=(t(v_1),\ldots,t(v_N))$ is the diagonal matrix with entries
		\begin{equation*}
		t(v_i)=|e\in E\, : \, v_i\in e|,
		\end{equation*}and $A^o=(A^o_{ij})$ is the \emph{oriented adjacency matrix} with entries $A^o_{ii}:=0$ for each $i$ and
		\begin{align*}
		A^o_{ij}:=& |\{\text{hyperedges in which }v_i \text{ and }v_j\text{ are anti-oriented}\}|\\
		&-|\{\text{hyperedges in which }v_i \text{ and }v_j\text{ are co-oriented}\}|,
		\end{align*}for $i\neq j$. Similarly, we write the chemical  Laplacian in \cite{BKJM} in matrix form as
		\begin{equation*}
		\Delta^o=D^{-1}T - D^{-1}A^o,
		\end{equation*}
		where $D=(d(v_1),\ldots,d(v_N))$ is the diagonal matrix with entries
		\begin{equation*}
		d(v_i):=\sum_{e\in E\,:\,v_i\in e}(|e|-1).
		\end{equation*}The off-diagonal entries of $L^o$ and $\Delta^o$ are such that
		\begin{equation*}
		L^o_{ij}=\frac{A^o_{ij}}{t(v_i)}\in [-1,1]\quad \text{and }\quad \Delta^o_{ij}=\frac{A^o_{ij}}{d(v_i)}\in [-1,1] \quad \text{for }i\neq j,
		\end{equation*}while their diagonal entries are such that
		\begin{equation*}
		0<\Delta^o_{ii}=\frac{t(v_i)}{d(v_i)}\leq 1=L^o_{ii},
		\end{equation*}and the two operators coincide in the case of graphs. Also, both these Laplacians are not necessarily zero-row sum matrices, in contrast to the case of the random walk Laplacians. Now, as in Theorem \ref{thm:epsilon}, let $\mathcal{I}=[a,b]$, with $0\leq a<b$. Let $f:\mathcal{I}\rightarrow \mathcal{I}$ and, given $\varepsilon\in [0,1]$, consider $C(L^o)=\id-\varepsilon L^o$. Then, given $\mathbf{x}\in \mathcal{I}^N$, $C(L^o)f(\mathbf{x})$ is not necessarily contained in $\mathcal{I}^N$. In fact, observe that \begin{equation*}
		C(L^o)_{ij}=\begin{cases}
		1-\varepsilon &\text{if }i=j\\
		\frac{\varepsilon A^o_{ij}}{t(v_i)} &\text{if }i\neq j.
		\end{cases}
		\end{equation*}Hence, $C(L^o)f(\mathbf{x})\in \mathcal{I}^N$ if and only if, for each $i=1,\ldots,N$,
		\begin{equation}\label{eQ1o}
		a\leq f(x_i) (1-\varepsilon)+\sum_{j\neq i}f(x_j)\cdot\frac{\varepsilon A^o_{ij}}{t(v_i)}\leq b.
		\end{equation}In the particular case where all vertices of $\Gamma$ are inputs for each hyperedge in which they are contained, one has
		\begin{equation*}
		A^o_{ij}=-|\{e\in E\,:\, v_i,v_j\in e\}|, \quad \text{if }i\neq j,
		\end{equation*}therefore
		\begin{equation*}
		\sum_{j}\frac{A^o_{ij}}{t(v_i)}=-\frac{d(v_i)}{t(v_i)}\leq -1.
		\end{equation*}Hence, if there exists $i$ such that $f(x_i)=a$ and $f(x_j)=b$ for all $j$ such that $v_j\sim v_i$, then
		\begin{equation*}
		f(x_i) (1-\varepsilon)+\sum_{j\neq i}f(x_j)\cdot\frac{\varepsilon A^o_{ij}}{t(v_i)}\leq a(1-\varepsilon)-b\varepsilon <a(1-\varepsilon)-a\varepsilon\leq a.
		\end{equation*}
		Therefore, \eqref{eQ1o} does not hold in this case, implying that, in general, $C(L^o)f(\mathbf{x})$ is not necessarily contained in $\mathcal{I}^N$. This can be shown, in a similar way, also for $C(\Delta^o)$.
	\end{remark}
	
	\begin{figure}[!ht]
		
		\hspace{1cm}
		\begin{overpic}[width = .25\linewidth]{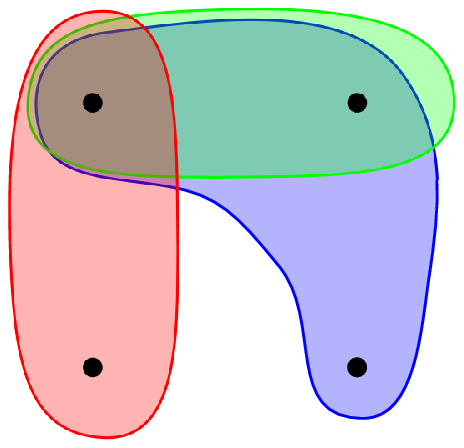}
			\put(-10,-34){\textbf{(a)}}
			
			\put(20,80){$v_1$}
			\put(80,80){$v_2$}
			\put(20,20){$v_3$}
			\put(80,20){$v_4$}
			
			\put(100,45){\color{blue}$e_1$}
			\put(45,100){\color{green}$e_2$}
			\put(-10,45){\color{red}$e_3$}
			
			\put(10,80){\color{blue}$+$}
			\put(23,70){\color{green}$-$}
			\put(10,65){\color{red}$+$}
			
			\put(80,65){\color{blue}$-$}
			\put(70,80){\color{green}$+$}
			
			\put(10,20){\color{red}$-$}
			
			\put(70,20){\color{blue}$+$}
		\end{overpic}
		
		\vspace{-4cm}\hspace{5.5cm}
		\begin{overpic}[width = .7\linewidth]{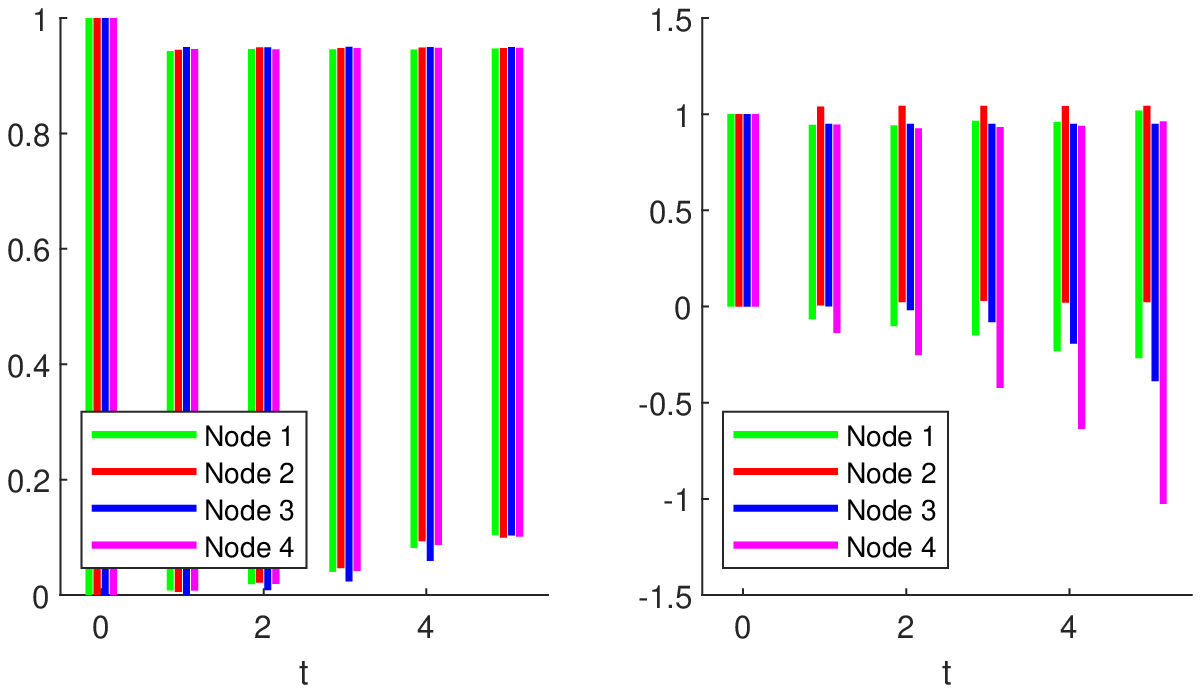}
			\put(5,0){\textbf{(b)}}
			\put(50,0){\textbf{(c)}}
		\end{overpic}
		
		\caption{To illustrate Theorem \ref{thm:epsilon}, we perform numerical simulations of the iterated map \eqref{eq:CHMs2} on an oriented hypergraph $\Gamma = (V,E)$ that is shown in \textbf{(a)}. Here, $V=\{v_1,v_2,v_3,v_4\}$ and $E = \{e_1,e_2,e_3\}$. A node $v_i$ is classified as an input (output) of an hyperedge $e_j$ if there is a plus (minus) sign in the respective color of $e_j$ next to $v_i$. We choose $\mathcal I = [0,1]$, $f$ to be the tent map with $\mu = 3.8$ and $\varepsilon = 0.3$. \textbf{(b)} shows simulation results with $\mathcal L$ being a random walk Laplacian for $10^6$ different uniformly distributed initial conditions in $\mathcal I^4$. In fact, the green, red, blue and magenta bars at $t=0$ in \textbf{(b)} depict the marginals of this uniform distribution projected to the first, second, third and fourth component, respectively. For $t>0$ all of these bars are still contained in $\mathcal I$, which illustrates Theorem \ref{thm:epsilon}. \textbf{(c)} shows the exact same simulation except that here $\mathcal L$ is not a random walk Laplacian but instead given by $L^o$ from \eqref{d5}. \textbf{(b)-(c)} show the clear dynamical difference of using different hypergraph Laplacians for CHMs.}
		\label{fig:interval_invariance}
	\end{figure}
	
	Before stating the next theorem pertaining to our second question, we make an observation regarding Eq.\ \eqref{eq:Q7}.
	\begin{remark}\label{rmk:Q7}
		Let $\mathcal{I}=[a,b]$, with $0\leq a<b$. Let $f:\mathcal{I}\rightarrow \mathcal{I}$ and let $C=\id-\varepsilon\mathcal{O}$, for some $N\times N$ Laplacian matrix $\mathcal{O}$. Let also $\mathbf{x}=(x_1,\ldots,x_N)\in \mathcal{I}_a^N$, so that $f(\mathbf{x})=(f(x_1),\ldots,f(x_N))\in \mathcal{I}^N$. Then,
		\begin{align*}
		(C\circ f)^2(\mathbf{x})&=(C\circ f)\biggl(\left(C f(\mathbf{x})\right)_i\biggr)_{i=1,\ldots,N}\\
		&=(C\circ f)\left(f(x_i) (1-\varepsilon \mathcal{O}_{ii})-\sum_{j\neq i}f(x_j)\cdot\varepsilon \mathcal{O}_{ij}\right)_{i=1,\ldots,N}\\
		&=C\left(f\biggl(f(x_i) (1-\varepsilon \mathcal{O}_{ii})-\sum_{j\neq i}f(x_j)\cdot\varepsilon \mathcal{O}_{ij}\biggr)\right)_{i=1,\ldots,N}.
		\end{align*}Hence, for $k=1,\ldots,N$,
		\begin{equation}\label{Q7A}
		\biggr( (C\circ f)^2(\mathbf{x})\biggl)_k=f(z_k) (1-\varepsilon \mathcal{O}_{kk})-\sum_{l\neq k}f(z_l)\cdot\varepsilon \mathcal{O}_{kl},
		\end{equation}where
		\begin{equation*}
		z_i:=f(x_i) (1-\varepsilon \mathcal{O}_{ii})-\sum_{j\neq i}f(x_j)\cdot\varepsilon \mathcal{O}_{ij}.
		\end{equation*}On the other hand,
		\begin{align*}
		C^2\circ f^2(\mathbf{x})&=C\biggl(C(f^2(\mathbf{x}))\biggr)\\
		&=C\biggl(\biggl(\bigl(C\cdot f^2(\mathbf{x})\bigr)_i\biggr)_{i=1,\ldots,N}\biggr)\\
		&=C\biggl(\biggl(f^2(x_i) (1-\varepsilon \mathcal{O}_{ii})-\sum_{j\neq i}f^2(x_j)\cdot\varepsilon \mathcal{O}_{ij}\biggr)_{i=1,\ldots,N}\biggr).
		\end{align*}Hence, for $k=1,\ldots,N$,
		\begin{equation}\label{Q7B}
		\biggr( C^2\circ f^2(\mathbf{x})\biggl)_k=y_k (1-\varepsilon \mathcal{O}_{ii})-\sum_{l\neq k}y_l\cdot\varepsilon \mathcal{O}_{kl},
		\end{equation}where
		\begin{equation*}
		y_i:=f^2(x_i) (1-\varepsilon \mathcal{O}_{ii})-\sum_{j\neq i}f^2(x_j)\cdot\varepsilon \mathcal{O}_{ij}.
		\end{equation*}Hence, \eqref{eq:Q7} holds if and only if \eqref{Q7A} is equal to \eqref{Q7B}, for all $k=1,\ldots,N$.\newline 
		This is true if, for all $i=1,\ldots,N$,
		\begin{equation}\label{eq:linear}
		f\Biggl(f(x_i) (1-\varepsilon \mathcal{O}_{ii})-\sum_{j\neq i}f(x_j)\cdot\varepsilon \mathcal{O}_{ij}\Biggr)=f^2(x_i) (1-\varepsilon \mathcal{O}_{ii})-\sum_{j\neq i}f^2(x_j)\cdot\varepsilon \mathcal{O}_{ij}.
		\end{equation}
	\end{remark}
	
	\begin{theorem}
		Let $\mathcal{I}=[0,1]$ and let $f:\mathcal{I}\rightarrow\mathcal{I}$ be a tent map, for some parameter $\mu\in [0,4]$. Let $\mathcal{L}$ be an $N\times N$ random walk Laplacian and let $C:=\id-\varepsilon\mathcal{L}$. Then, for $\mathcal{I}_a=[0,\frac{1}{4}]$, we have that
		\begin{equation*}
		(C\circ f)^2|_{\mathcal{I}_a}=C^2\circ f^2|_{\mathcal{I}_a}.
		\end{equation*}
	\end{theorem}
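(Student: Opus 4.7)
The plan is to invoke the sufficient condition from Remark \ref{rmk:Q7}: it suffices to check that, for every $\mathbf{x}\in\mathcal{I}_a^{N}$ and every $i\in\{1,\ldots,N\}$, one has $f(z_i)=y_i$, where
\begin{equation*}
z_i=(1-\varepsilon)f(x_i)+\varepsilon\sum_{j\neq i}\mathbb{P}(v_i\to v_j)\,f(x_j),\quad y_i=(1-\varepsilon)f^2(x_i)+\varepsilon\sum_{j\neq i}\mathbb{P}(v_i\to v_j)\,f^2(x_j).
\end{equation*}
These are \eqref{eq:linear} specialized to the random walk Laplacian $\mathcal{L}$, for which $\mathcal{L}_{ii}=1$ and $-\mathcal{L}_{ij}=\mathbb{P}(v_i\to v_j)$.

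The driving observation, which makes the proof short, is that the tent map is \emph{linear} on $[0,1/2]$, where it coincides with $x\mapsto \mu x/2$. So as soon as $z_i$ is known to lie in $[0,1/2]$, one immediately has $f(z_i)=\tfrac{\mu}{2}z_i$; distributing the factor $\tfrac{\mu}{2}$ across the convex combination defining $z_i$ then yields exactly $y_i$, using the identity $\tfrac{\mu}{2}f(x_j)=f^{2}(x_j)$ valid whenever $f(x_j)\in[0,1/2]$.

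The only real work, and the step I expect to require the most care, is the range check. For $\mu\in(0,4]$ and $x_j\in[0,1/4]$, one has $f(x_j)=\mu x_j/2\in[0,\mu/8]\subseteq[0,1/2]$. The theorem tacitly restricts to $\varepsilon\in[0,1]$, since by Theorem \ref{thm:epsilon} this is the regime in which $(C\circ f)^{2}(\mathbf{x})$ is even well-defined; in that regime $z_i$ is a convex combination of the values $f(x_j)$ and therefore also lands in $[0,\mu/8]\subseteq[0,1/2]$. Thus the particular choice $\mathcal{I}_a=[0,1/4]$ is precisely what keeps both $f(x_j)$ and $z_i$ inside the linear branch of $f$, and the whole theorem then reduces to the clean algebraic statement that the affine operator $C$ commutes with the linear branch of the tent map.
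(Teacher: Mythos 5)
Your proof is correct and follows essentially the same route as the paper's: both reduce the claim to condition \eqref{eq:linear} from Remark \ref{rmk:Q7}, verify that $f(x_j)$ and the convex combination $z_i$ stay in $[0,1/2]$ when $x_j\in[0,\tfrac14]$ and $\mu\le 4$, and then exploit the linearity of the tent map on that branch. Your explicit remark that $\varepsilon\in[0,1]$ is tacitly assumed is a fair observation; the paper's proof relies on the same restriction implicitly when bounding the convex combination by $\max_k x_k$.
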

	\begin{proof}Fix two matrices $\mathcal{A}$ and $\mathcal{D}$ that satisfy \eqref{eq:generalRW}, so that $\mathcal{L}=\id-\mathcal{D}^{-1}\mathcal{A}$.\newline Observe that, for each $x\in \mathcal{I}_a$, $x\leq \frac{1}{4}\leq \frac{1}{\mu}$, therefore
		\begin{equation*}
		f(x)=\frac{\mu}{2}\cdot x\leq \frac{1}{2}
		\end{equation*}and hence
		\begin{equation*}
		f^2(x)=\frac{\mu}{2}\cdot f(x)=\frac{\mu^2}{4}\cdot x.
		\end{equation*}Therefore, the right-hand side of \eqref{eq:linear} in this case can be re-written as  
		\begin{equation*}
		f^2(x_i) (1-\varepsilon)+\sum_{j\neq i}f^2(x_j)\cdot\frac{\varepsilon \mathcal{A}_{ij}}{\mathcal{D}_{ii}}=\frac{\mu^2}{4}\Biggl(x_i (1-\varepsilon)+\sum_{j\neq i}x_j\cdot\frac{\varepsilon \mathcal{A}_{ij}}{\mathcal{D}_{ii}}\Biggr),
		\end{equation*}while the left-hand side of \eqref{eq:linear} is
		\begin{align*}
		f\Biggl(f(x_i) (1-\varepsilon)+\sum_{j\neq i}f(x_j)\cdot\frac{\varepsilon \mathcal{A}_{ij}}{\mathcal{D}_{ii}}\Biggr)&=f\Biggl(\frac{\mu}{2}\cdot\biggl( x_i (1-\varepsilon)+\sum_{j\neq i} x_i\cdot\frac{\varepsilon \mathcal{A}_{ij}}{\mathcal{D}_{ii}}\biggr)\Biggr)\\
		&=\frac{\mu}{2}\cdot \Biggl(\frac{\mu}{2}\cdot\biggl( x_i (1-\varepsilon)+\sum_{j\neq i} x_i\cdot\frac{\varepsilon \mathcal{A}_{ij}}{\mathcal{D}_{ii}}\biggr)\Biggr),
		\end{align*}where the last equality follows from the fact that
		\begin{equation*}
		\frac{\mu}{2}\cdot\biggl( x_i (1-\varepsilon)+\sum_{j\neq i} x_i\cdot\frac{\varepsilon \mathcal{A}_{ij}}{\mathcal{D}_{ii}}\biggr)\leq \frac{\mu}{2}\cdot \max_{k}x_k\leq \frac{\mu}{2}\cdot\frac{1}{\mu}=\frac{1}{2}.
		\end{equation*}Hence, \eqref{eq:linear} holds. By Remark \ref{rmk:Q7}, the claim follows.
	\end{proof}

	\textbf{Acknowledgments:} RM was supported by The Alan Turing Institute under the EPSRC grant EP/N510129/1. CK was supported a Lichtenberg Professorship of the VolkswagenStiftung. TB thanks the TUM Institute for Advanced Study (TUM-IAS) for support through a Hans Fischer Fellowship awarded to Chris Bick. TB also acknowledges support of the TUM TopMath elite study program. JJ is supported by GIF Research Grant No. I-1514-304.6/2019.
	
	\bibliography{RWs}

\begin{thebibliography}{10}

\bibitem{AlligoodSauerYorke}
K.~T. Alligood, T.~D. Sauer, and J.~A. Yorke.
\newblock {\em Chaos: An Introduction to Dynamical Systems}.
\newblock Springer, 1996.

\bibitem{Banerjee}
A.~Banerjee.
\newblock On the spectrum of hypergraphs.
\newblock {\em Linear Algebra and its Applications}, 614:82--110, 2021.
\newblock Special Issue ILAS 2019.

\bibitem{battiston2020networks}
F.~Battiston, G.~Cencetti, I.~Iacopini, V.~Latora, M.~Lucas, A.~Patania, J.-G.
  Young, and G.~Petri.
\newblock Networks beyond pairwise interactions: structure and dynamics.
\newblock {\em Physics Reports}, 2020.

\bibitem{BKJM}
T.~Böhle, C.~Kuehn, R.~Mulas, and J.~Jost.
\newblock {Coupled Hypergraph Maps and Chaotic Cluster Synchronization}.
\newblock arXiv:2102.02272.

\bibitem{epidemic}
{\'A}.~Bod{\'o}, G.~Y. Katona, and P.~L. Simon.
\newblock Sis epidemic propagation on hypergraphs.
\newblock {\em Bulletin of mathematical biology}, 78(4):713--735, 2016.

\bibitem{bradde09}
S.~Bradde and G.~Bianconi.
\newblock The percolation transition in correlated hypergraphs.
\newblock {\em Journal of Statistical Mechanics: Theory and Experiment},
  2009(07):P07028, 2009.

\bibitem{butler2006spectral}
S.~Butler and F.~Chung.
\newblock Spectral graph theory.
\newblock {\em Handbook of linear algebra}, page~47, 2006.

\bibitem{rwhyp}
T.~Carletti, F.~Battiston, G.~Cencetti, and D.~Fanelli.
\newblock Random walks on hypergraphs.
\newblock {\em Phys. Rev. E}, 101:022308, Feb 2020.

\bibitem{carletti2021random}
T.~Carletti, D.~Fanelli, and R.~Lambiotte.
\newblock Random walks and community detection in hypergraphs.
\newblock {\em Journal of Physics: Complexity}, 2(1):015011, 2021.

\bibitem{carlettidynamical}
T.~Carletti, D.~Fanelli, and S.~Nicoletti.
\newblock Dynamical systems on hypergraphs.
\newblock {\em Journal of Physics: Complexity}, 1(3):035006, 2020.

\bibitem{Chung}
F.~Chung.
\newblock Spectral graph theory.
\newblock {\em American Mathematical Society}, 1997.

\bibitem{ColletEckmann2}
P.~Collet and J.~P. Eckmann.
\newblock {\em Iterated Maps of the Interval as Dynamical Systems}.
\newblock Birkh{\"a}user, 1980.

\bibitem{Courant28}
R.~Courant, K.~Friedrichs, and H.~Lewy.
\newblock {\"Uber die partiellen Differentialgleichungen der mathematischen
  Physik}.
\newblock {\em Math. Ann.}, 100:32--74, 1928.

\bibitem{social}
G.~F. de~Arruda, G.~Petri, and Y.~Moreno.
\newblock Social contagion models on hypergraphs.
\newblock {\em Physical Review Research}, 2(2):023032, 2020.

\bibitem{DeMeloVanStrien}
W.~De~Melo and S.~Van~Strien.
\newblock {\em One-dimensional Dynamics}.
\newblock Springer, 2012.

\bibitem{Feigenbaum}
M.~J. Feigenbaum.
\newblock Quantitative universality for a class of nonlinear transformations.
\newblock {\em J. Stat. Phys.}, 19(1):25--52, 1978.

\bibitem{Grimmett10}
G.~Grimmett.
\newblock {\em Probability on graphs}.
\newblock Cambridge Univ. Press, 2010.

\bibitem{biology}
J.~Jost.
\newblock {\em Mathematical methods in biology and neurobiology}.
\newblock Berlin: Springer, 2014.

\bibitem{JM}
J.~Jost and R.~Mulas.
\newblock {Hypergraph Laplace operators for chemical reaction networks}.
\newblock {\em Advances in Mathematics}, 351:870--896, 2019.

\bibitem{JM21b}
J.~Jost and R.~Mulas.
\newblock {Normalized Laplace Operators for Hypergraphs with Real
  Coefficients}.
\newblock {\em J. Complex Netw.}, 9(1):cnab009, 2021.

\bibitem{Kaneko}
K.~Kaneko.
\newblock {\em Theory and Applications of Coupled Map Lattices}.
\newblock Wiley, 1993.

\bibitem{Theis}
S.~Klamt, U.-U. Haus, and F.~Theis.
\newblock Hypergraphs and cellular networks.
\newblock {\em PLoS Comput Biol}, 5(5):e1000385, 2009.

\bibitem{KuehnBick}
C.~Kuehn and C.~Bick.
\newblock A universal route to explosive phenomena.
\newblock {\em Sci. Adv.}, 7(16):eabe3824, 2021.

\bibitem{LemaitreChate}
A.~Lema{\^i}tre and H.~Chat{\'e}.
\newblock Nonperturbative renormalization group for chaotic map lattices.
\newblock {\em Phys. Rev. Lett.}, 80(25):5528--5531, 1998.

\bibitem{LemaitreChate1}
A.~Lema{\^i}tre and H.~Chat{\'e}.
\newblock Renormalization group for strongly coupled maps.
\newblock {\em J. Stat. Phys.}, 96(5):915--962, 1999.

\bibitem{Mulas-Cheeger}
R.~Mulas.
\newblock {A Cheeger Cut for Uniform Hypergraphs}.
\newblock {\em Graphs and Combinatorics}, 2021.

\bibitem{Sharp}
R.~Mulas.
\newblock {Sharp bounds for the largest eigenvalue}.
\newblock {\em Mathematical notes}, 109:102--109, 2021.

\bibitem{RM-bio}
R.~Mulas and M.~J. Casey.
\newblock {Estimating cellular redundancy in networks of genetic expression}.
\newblock arXiv:2106.03663.

\bibitem{Master-Stability}
R.~Mulas, C.~Kuehn, and J.~Jost.
\newblock Coupled dynamics on hypergraphs: Master stability of steady states
  and synchronization.
\newblock {\em Phys. Rev. E}, 101:062313, 2020.

\bibitem{MulasZhang}
R.~Mulas and D.~Zhang.
\newblock {Spectral theory of Laplace Operators on oriented hypergraphs}.
\newblock {\em Discrete Math.}, 344(6):112372, 2021.

\bibitem{NeuhaeuserMellorLambiotte}
L.~Neuh{\"a}user, A.~Mellor, and R.~Lambiotte.
\newblock Multibody interactions and nonlinear consensus dynamics on networked
  systems.
\newblock {\em Phys. Rev. E}, 101(3):032310, 2020.

\bibitem{bitcoin}
S.~Ranshous, C.~A. Joslyn, S.~Kreyling, K.~Nowak, N.~F. Samatova, C.~L. West,
  and S.~Winters.
\newblock Exchange pattern mining in the bitcoin transaction directed
  hypergraph.
\newblock In {\em International Conference on Financial Cryptography and Data
  Security}, pages 248--263. Springer, 2017.

\bibitem{Robinson3}
R.~C. Robinson.
\newblock {\em An Introduction to Dynamical Systems: Continuous and Discrete}.
\newblock AMS, 2013.

\bibitem{SkardalArenas}
P.~S. Skardal and A.~Arenas.
\newblock Higher-order interactions in complex networks of phase oscillators
  promote abrupt synchronization switching.
\newblock {\em Common. Phys.}, 3(1):1--6, 2020.

\bibitem{Sun}
H.~Sun and G.~Bianconi.
\newblock {Higher-order percolation processes on multiplex hypergraphs}.
\newblock arXiv:2104.05457.

\end{thebibliography}
	
\end{document}